\numberwithin{equation}{section}
                        \theoremstyle{plain}
\newcommand{\psdraw}[2]
         {\begin{array}{c} \hspace{-1.3mm}
         \raisebox{-4pt}{\psfig{figure=#1.eps,width=#2}}
         \hspace{-1.9mm}\end{array}}
\newtheorem{theorem}{Theorem}[section]
\newtheorem{thm}{Theorem}
\newtheorem{pro}{Proposition}
\newtheorem{lemma}[theorem]{Lemma}
\newtheorem{corollary}[theorem]{Corollary}
\theoremstyle{definition}
\newtheorem{remark}[theorem]{Remark}
\newcommand{\lcr}{\raisebox{-5pt}{\mbox{}\hspace{1pt}
                  \epsfig{file=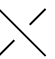}\hspace{1pt}\mbox{}}}
\newcommand{\ift}{\raisebox{-5pt}{\mbox{}\hspace{1pt}
                  \epsfig{file=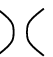}\hspace{1pt}\mbox{}}}
\newcommand{\zer}{\raisebox{-5pt}{\mbox{}\hspace{1pt}
                  \epsfig{file=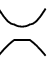}\hspace{1pt}\mbox{}}}
\def\BC{\mathbb C}
\def\BZ{\mathbb Z}
\def\BR{\mathbb R}
\def\CK{\mathcal K}
\def\CR{\mathcal R}
\def\CS{\mathcal S}
\def\fa{\mathfrak a}
\def\fb{\mathfrak b}
\def\fd{\mathfrak d}
\def\fl{\mathfrak l}
\def\fm{\mathfrak m}
\def\fu{\mathfrak u}
\def\la{\langle}
\def\ra{\rangle}
\DeclareMathOperator{\tr}{\mathrm tr}
\def\ve{\varepsilon}
\def\be { \begin{equation} }
\def\ee { \end{equation} }
\begin{document}

\title[The KBSM of two-bridge links]{The Kauffman bracket skein module of two-bridge links}

\author[Thang  T. Q. Le]{Thang  T. Q. Le}
\address{School of Mathematics, 686 Cherry Street,
 Georgia Tech, Atlanta, GA 30332, USA}
\email{letu@math.gatech.edu}

\author[Anh T. Tran]{Anh T. Tran}
\address{School of Mathematics, 686 Cherry Street,
 Georgia Tech, Atlanta, GA 30332, USA}
\email{tran@math.gatech.edu}

\thanks{T.L. was supported in part by National Science Foundation. \\
2010 {\em Mathematics Classification:} Primary 57N10. Secondary 57M25.\\
{\em Key words and phrases: skein module, character variety, two-bridge link.}}

\begin{abstract}
We calculate the Kauffman bracket skein module (KBSM) of the complement of all two-bridge links.
For a two-bridge link, we show that the KBSM of its complement is free over the ring
$\BC[t^{\pm 1}]$ and when reducing $t=-1$, it is isomorphic to the ring of regular functions on the character variety of the link group.
\end{abstract}

\maketitle

\setcounter{section}{-1}

\section{Introduction}

The theory of Kauffman bracket skein module (KBSM) was introduced by Przytycki \cite{Pr} and Turaev \cite{Tu} as a generalization of the Kauffman bracket \cite{Kauffman} in $S^3$  to an arbitrary
$3$-manifold. The KBSM of a knot complement contains a lot, if not all, of information about the colored Jones polynomial. It also contains a lot of information about
classical geometric invariants such as the character variety,  and has been instrumental in the study of the AJ conjecture which relates the colored Jones polynomial and the $A$-polynomial of a knot, see \cite{FGL,Ge,Ga04,Le06,LT}.
The calculation of the KBSM of a knot complement is a difficult task.
At the moment, the KBSM has been calculated only for two-bridge knots \cite{Le06} (with earlier work for twist knots \cite{BL}) and torus knots \cite{Ma} (with earlier work for $(2,2m+1)$-torus knots
\cite{Bu95}).
In this paper, we calculate the KBSM of the complement of all two-bridge links. Applications to the theory of AJ conjecture for links will be discussed in a subsequent work.

\subsection{Skein modules}
\def\cS{\CS}
 A {\em framed link} in an oriented $3$-manifold $Y$ is a disjoint union of embedded circles, each of which is equipped with a non-zero normal vector field. Framed links are considered up to isotopy. In all figures we will draw  framed links,
or part of them, by lines as usual, with the convention that the
framing is blackboard. Let $\mathcal{L}$ be the set of isotopy
classes of framed links in the manifold $Y$, including the empty
link. Consider the free $\BC[t^{\pm 1}]$-module with basis $\mathcal{L}$, and
factor it by the smallest submodule containing all expressions of
the form $\lcr-t\zer-t^{-1}\ift$ and
$\bigcirc+(t^2+t^{-2}) \emptyset$, where the links in each
expression are identical except in a ball in which they look like
depicted. This quotient is denoted by $\CS(Y)$ and is called the
Kauffman bracket skein module, or just skein module, of $Y$.

If $Y_1\subset Y_2$, then the embedding $Y_1\hookrightarrow Y_2$ induces a linear map  $\CS(Y_1) \to \CS(Y_2)$.

For an oriented surface $\Sigma$ we define $\CS(\Sigma)= \CS(Y)$, where $ Y= \Sigma \times [0,1]$, the cylinder over $\Sigma$. The skein module
$\CS(\Sigma)$ has an algebra structure induced by the operation
of gluing one cylinder on top of the other.

\subsection{Main Results}

\label{main}

A two-bridge link is a two-component link $L \subset S^3$ such that
there is a $2$-sphere $S^2\subset S^3$ separating $S^3$ into $2$
balls $B_1$ and $B_2$, and the intersection of $L$ and each ball
is isotopic to $2$ trivial arcs in the ball. The branched double
covering of $S^3$ along a two-bridge link is a lens space
$L(2p,q)$, which is obtained by doing a $2p/q$ surgery on the
unknot. Such a two-bridge link is denoted by $\fb(2p,q)$.
Here $\gcd (q,2p)=1$, and one can always assume that $2p>q \ge 1$. It is known that $\fb(2p',q')$ is isotopic to $ \fb(2p,q)$ if and only if $p'=p$ and $q' \equiv q^{\pm 1} \pmod{2p}$, see \cite{BZ}.

Assume the 3-ball $B_1$ is presented as a vertical cylinder $B_1= D \times [0,1]$, where $D$ is a $2$-dimensional disk, and the two arcs of $L$ inside $B_1$ are two vertical line segments $U \times [0,1]$ and $U' \times [0,1]$,  where $U$ and $U'$ are 2 interior points of $D$. Let $D_{\ast\ast}= D\setminus \{U,U'\}$, then $B_1 \setminus L = D_{\ast\ast} \times [0,1]$. Hence $\cS(B_1 \setminus L)= \cS(D_{\ast\ast})$ is an algebra.
Let $x, x' \subset D_{\ast\ast}$ are respectively small loops around $U,U'$, and $y = \partial D \subset D_{\ast\ast}$ is the boundary of $D$. We consider $x,x',$ and $y$ as elements of the algebra $\cS(B_1 \setminus L)$.
Using the embedding  $(B_1 \setminus L) \subset (S^3\setminus L)$ we will consider $x^a (x')^b\,y^c$ as an element of $\CS(S^3 \setminus L)$.

\begin{figure}[htpb]
$$ \psdraw{2disk}{2.5in} $$
\caption{The loops $x,x'$ and $y$}
\end{figure}


\begin{thm} For the two-bridge link $L=\fb(2p,q)$, the skein module $\CS(S^3 \setminus L)$ is free over $\BC[t^{\pm 1}]$ with basis $\{x^a(x')^by^c \mid 0 \le a,b, \, 0 \le c \le p\}.$
\label{thm}
\end{thm}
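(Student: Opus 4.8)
The plan is to present $\CS(S^3\setminus L)$ as an explicit quotient of the polynomial algebra $\CS(D_{\ast\ast})$, to read off the stated set as a spanning set, and then to prove freeness by specializing $t=-1$. Recall first that the skein algebra of the twice-punctured disk $D_{\ast\ast}$ (a three-holed sphere) is the commutative polynomial algebra $\CS(D_{\ast\ast})=\BC[t^{\pm1}][x,x',y]$ on its three boundary loops, so $\CS(B_1\setminus L)$ is free on $\{x^a(x')^by^c\mid a,b,c\ge 0\}$. I would next check that the inclusion-induced map $\CS(B_1\setminus L)\to\CS(S^3\setminus L)$ is onto. Writing $B_2\setminus L\cong D'_{\ast\ast}\times[0,1]$, any framed link in $S^3\setminus L$ meets $B_2\setminus L$ in a tangle that, by the product structure, is isotopic into a collar of the end $D'_{\ast\ast}\times\{0\}$, which lies on the gluing surface $S^2\setminus L$; pushing it across, the whole link is isotoped into $B_1\setminus L$. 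Hence $\{x^a(x')^by^c\}$ already spans $\CS(S^3\setminus L)$ over $\BC[t^{\pm1}]$, and the content of the theorem is that the $y$-exponent may be reduced to $\le p$ and that no further relations occur.

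For the degree bound I would produce a relation in $\CS(S^3\setminus L)$ of the form $y^{p+1}=\sum_{c=0}^{p}\lambda_c(x,x')\,y^c$ — or, more naturally, a Chebyshev-normalized version expressing $S_{p+1}(y)$ through the $S_c(y)$ with $c\le p$ — whose leading coefficient is a unit of $\BC[t^{\pm1}]$. Such a relation should come from the two-bridge structure: the loop $y=\partial D$ also bounds, on the $B_2$-side, a disk meeting $L$ in two points, and the twisting of this disk relative to $D$ is recorded by the continued fraction of $q/2p$ (equivalently by the $2p/q$-surgery presentation of the branched double cover $L(2p,q)$). Pushing $S_{p+1}(y)$ through the rational tangle in $B_2$ and resolving crossings should express it in terms of lower $S_c(y)$; the exponent $p$ is forced because $y$ wraps $2p$ times around the core of the relevant solid torus before becoming inessential — compare $\CS(L(2p,q))$, which has rank $p+1$. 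Carrying out this reduction carefully, with the sharp exponent $p$ and a unit leading coefficient, is the main computational task.

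It then remains to prove that $\{x^a(x')^by^c\mid 0\le c\le p\}$ is $\BC[t^{\pm1}]$-linearly independent. Here I would specialize $t=-1$: by the theorems of Bullock and of Przytycki--Sikora the natural map $\CS(S^3\setminus L)|_{t=-1}\to\BC[X(L)]$ onto the coordinate ring of the $SL_2(\BC)$-character variety of the link group has nilpotent kernel, and for a two-bridge link the variety $X(\fb(2p,q))$ is explicitly known: in coordinates $\alpha=\tr\mu_1$, $\beta=\tr\mu_2$, $\gamma=\tr(\mu_1\mu_2)$ one has $\BC[X(L)]=\BC[\alpha,\beta,\gamma]/(\Phi)$ with $\Phi$ monic of degree $p+1$ in $\gamma$, so that $\{\alpha^a\beta^b\gamma^c\mid a,b\ge0,\ 0\le c\le p\}$ is a $\BC$-basis; moreover $x,x',y$ specialize exactly to $\alpha,\beta,\gamma$. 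Thus already at $t=-1$ the images of the proposed basis are independent — which in particular shows the kernel above vanishes and yields the character-variety statement of the abstract. To upgrade to independence over $\BC[t^{\pm1}]$ I would pass to the associated graded ring for the $y$-filtration: the relation of the previous paragraph has leading term a unit times $y^{p+1}$, while the $t=-1$ computation forbids any relation of smaller $y$-degree, so $\mathrm{gr}\,\CS(S^3\setminus L)\cong\BC[t^{\pm1}][x,x',y]/(y^{p+1})$, which is free on $\{x^a(x')^by^c\mid 0\le c\le p\}$, and hence so is $\CS(S^3\setminus L)$. I expect this last step — ruling out relations of smaller $y$-degree, i.e. sharpness of the rank from below — to be the real obstacle, which is precisely why routing it through the well-understood character variety at $t=-1$ is the key move.
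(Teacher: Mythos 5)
Your overall framework matches the paper's: realize $\CS(S^3\setminus L)$ as a quotient of $\CS(D_{\ast\ast})=\CR[x,x',y]$ (with $\CR=\BC[t^{\pm1}]$) and exhibit relations of $y$-degree $p+1$ with invertible leading coefficient. But there are two genuine gaps. First, the heart of the matter is a precise description of the kernel $\CK$ of $\CR[x,x',y]\to\CS(S^3\setminus L)$, and you leave exactly this as ``the main computational task.'' Note that $\CK$ is only an $\CR[x,x']$-submodule, not an ideal: the curve $y$ does not lie on $\partial(S^3\setminus L)$, so you cannot multiply a relation by $y$ to propagate it to higher degrees. Hence a single relation $y^{p+1}=\sum_{c\le p}\lambda_c(x,x')y^c$ gives neither spanning (you cannot reduce $y^{p+2}$) nor your claimed identification $\mathrm{gr}\,\CS(S^3\setminus L)\cong\CR[x,x',y]/(y^{p+1})$, since $(y^{p+1})$ is an ideal but $\CK$ is not. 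The paper handles this by attaching a $2$-handle along the curve $\omega$ and computing $\CK$ via handle slides: a relative skein module argument (Lemmas \ref{0}--\ref{4}) cuts the a priori four families of generators down to the single family $sl(\fa_1)\star y^k$, $k\ge 0$ (the $180^\circ$ rotation $\psi$ kills two of them and identifies the remaining two), and \cite[Lemma 1.1]{Le06}, applied to the braid closures $(\fa_1\cdot\omega_1)\star y^k$ on $2p+2+2k$ strands and $(\fa_1\cdot\omega_2)\star y^k$ on $2p-2+2k$ strands, shows the $k$-th generator has $y$-degree exactly $p+1+k$ with leading coefficient a power of $t$. Freeness over $\CR[x,x']$ then follows by Gaussian elimination, with no specialization of $t$ needed.

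Second, your independence argument is insufficient even granting its inputs. Specializing at the single value $t=-1$ can only show that the coefficients $\lambda_{abc}(t)$ of a putative dependence vanish at $t=-1$; it cannot rule out $(t+1)$-torsion in $\CS(S^3\setminus L)$, which is precisely the kind of degeneration the theorem excludes. For the same reason the assertion that ``the $t=-1$ computation forbids any relation of smaller $y$-degree'' fails: a torsion relation is invisible at $t=-1$. There is also a circularity concern: the presentation $\BC[X(L)]=\BC[\alpha,\beta,\gamma]/(\Phi)$ with $\Phi$ squarefree of $\gamma$-degree $p+1$ and with $\{\alpha^a\beta^b\gamma^c\mid 0\le c\le p\}$ a $\BC$-basis is what the paper \emph{derives} from Theorem \ref{thm} (via Proposition \ref{phu} and Corollary \ref{character}); to use it as an input you would need an independent proof that $(\Phi)$ is the full radical ideal of the character variety, which requires both a set-theoretic identification of $X(L)$ with $V(\Phi)$ and the squarefreeness computation of Lemma \ref{00}.
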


\subsection{The universal character ring} 

\label{universal}

Let $\ve (\CS(Y))$ be the quotient of $\CS(Y)$ by the relation $t=-1$. 
 An important result \cite{Bu97,PS} in the theory of skein modules is that
$\varepsilon (\CS(Y))$ has a natural $\BC$-algebra structure and is isomorphic to the universal $SL_2$-character algebra of the fundamental group of $Y$. For a definition of the universal character algebra, see \cite{BH, LM}. The product of 2 links in $\varepsilon
(\CS(Y))$ is their union. Using the skein relation with $t=-1$, it
is easy to see that the product is well-defined, and that the
value of a knot in the skein module depends only on the homotopy
class of the knot in $Y$. The isomorphism between
$\varepsilon(\CS(Y))$ and the universal $SL_2$-character algebra of $\pi_1(Y)$ is given by $K(\rho)= -\text{tr}\, \rho(K)$,
where $K$ is a homotopy class of a knot in $Y$, represented by an
element, also denoted by $K$, of $\pi_1(Y)$, and $\rho:\pi_1(Y) \to
SL_2(\BC)$ is a representation of $\pi_1(Y)$.
The quotient of $\varepsilon (\CS(Y))$
 by its nilradical is canonically isomorphic to
$\BC[\chi(\pi_1(Y))]$, the ring of regular functions on the $SL_2$-character
variety of $\pi_1(Y)$. 

The above fact has been exploited in the work of Frohman, Gelca, and Lofaro \cite{FGL} where they
defined the non-commutative $A$-ideal of a knot, and in our proof of the AJ conjecture \cite{Ga04} for some classes of two-bridge knots and pretzel knots in \cite{Le06, LT}.
In our work on the AJ conjecture, it is important to know whether the universal character algebra $\varepsilon (\CS(Y))$ is reduced, i.e. whether its nilradical is 0.
Although it is difficult to construct a group whose universal character algebra is not reduced (see \cite{LM}),
so far there are a few groups for which the universal character algebra is known to be reduced: free groups \cite{Sikora}, surface groups \cite{Marche_Laurent}, two-bridge knot groups \cite{PS},
torus knot groups \cite{Ma}, and some pretzel knot groups \cite{LT}.

As a consequence of Theorem \ref{thm}, we will show the following.

\begin{pro}
For a two-bridge link $L$, the universal $SL_2$-character algebra  $\varepsilon(\CS(S^3 \setminus L))$ is reduced, and hence $\varepsilon(\CS(S^3 \setminus L))$ is canonically isomorphic to
 the ring of regular functions on the $SL_2$-character
variety of $\pi_1(S^3\setminus L)$. \label{phu}
\end{pro}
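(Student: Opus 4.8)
The plan is to deduce Proposition~\ref{phu} directly from Theorem~\ref{thm} by specializing $t=-1$. Setting $\varepsilon(\CS(S^3\setminus L))$ as the quotient of $\CS(S^3\setminus L)$ by $t+1$, Theorem~\ref{thm} tells us that $\varepsilon(\CS(S^3\setminus L))$ is a free $\BC$-module with basis the images of $x^a(x')^by^c$ for $0\le a,b$ and $0\le c\le p$. By the Bullock--Przytycki--Sikora theorem quoted in Section~\ref{universal}, this algebra is isomorphic to the universal $SL_2$-character algebra of $\pi_1(S^3\setminus L)$, and its quotient by the nilradical is $\BC[\chi(\pi_1(S^3\setminus L))]$. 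So the whole content of the proposition is the assertion that $\varepsilon(\CS(S^3\setminus L))$ has zero nilradical, equivalently that the natural surjection $\varepsilon(\CS(S^3\setminus L))\twoheadrightarrow \BC[\chi(\pi_1(S^3\setminus L))]$ is injective.

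First I would identify the target ring concretely. For a two-bridge link group $\pi_1(S^3\setminus L)$, generated by two meridians, a representation $\rho$ is determined up to conjugation (generically) by the traces of the two meridians and of their product; using the skein-theoretic generators, $x\mapsto -\tr\rho(\text{meridian}_1)$, $x'\mapsto -\tr\rho(\text{meridian}_2)$, and $y$ corresponds (up to sign) to the trace of the product of the two meridians, since $y=\partial D$ bounds a disk meeting $L$ in the two points $U,U'$ so represents the product of the two meridian loops in $\pi_1(S^3\setminus L)$ — more precisely $y$ maps to a polynomial in these trace functions. Thus $\BC[\chi(\pi_1(S^3\setminus L))]$ is the quotient of $\BC[x,x',z]$ (with $z$ the product-meridian trace variable) by the ideal of relations coming from the two-bridge presentation, and one knows from the work on two-bridge \emph{knots} (Le, Przytycki--Sikora) that for a two-bridge presentation the single defining relator produces essentially one polynomial relation. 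The key algebraic step is then: show that $\varepsilon(\CS(S^3\setminus L))$, which by Theorem~\ref{thm} is free of the explicit shape above (polynomial in $x,x'$, and a degree-$\le p$ polynomial in $y$), maps isomorphically onto this explicit quotient ring. Concretely, one exhibits the defining relation of the character variety as a monic (in $y$, of degree $p+1$) polynomial $\Phi(x,x',y)$ so that $\BC[x,x',y]/(\Phi)$ is already free with basis $x^a(x')^by^c$, $0\le c\le p$; since the surjection $\BC[x,x',y]/(\Phi)\to\varepsilon(\CS)$ hits a basis and is degree-preserving onto a free module of the same rank, it is an isomorphism, and $\BC[x,x',y]/(\Phi)$ has no nilpotents provided $\Phi$ is reduced (square-free).

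Carrying this out, the steps in order would be: (1) recall the two-bridge link group presentation $\la a,b \mid wa = bw\ra$ or the standard $\fb(2p,q)$ form, and write the trace of the relator as a polynomial identity in $x,x',y$, producing the character-variety relation $\Phi$; (2) verify $\Phi$ is monic of degree $p+1$ in $y$ (this should follow from the Chebyshev-type recursions that already appear in the computation proving Theorem~\ref{thm}, so the relation used to reduce $y^{p+1}$ in the skein module is precisely $\Phi$ at $t=-1$); (3) conclude $\BC[x,x',y]/(\Phi)$ is free over $\BC$ with basis $\{x^a(x')^by^c : 0\le a,b,\ 0\le c\le p\}$; (4) the canonical map $\BC[x,x',y]/(\Phi)\to \varepsilon(\CS(S^3\setminus L))$ sends this basis to the basis of Theorem~\ref{thm}, hence is an isomorphism of $\BC$-algebras; (5) show $\Phi$ is square-free — equivalently that the character variety is reduced — so that $\BC[x,x',y]/(\Phi)$, and therefore $\varepsilon(\CS(S^3\setminus L))$, is reduced. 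Then by Section~\ref{universal} the reduced algebra $\varepsilon(\CS(S^3\setminus L))$ is canonically $\BC[\chi(\pi_1(S^3\setminus L))]$.

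The main obstacle I expect is step~(5): proving that the defining polynomial $\Phi$ of the two-bridge \emph{link} character variety is square-free. For two-bridge \emph{knots} this is classical (Przytycki--Sikora), but links bring in the extra subtlety that the two meridians need not be conjugate, the trace variables $x,x'$ are genuinely independent, and there can be reducible/abelian components (the link group abelianizes to $\BZ^2$) meeting the non-abelian part. One has to check these components are not embedded with multiplicity, i.e. that $\Phi$ does not acquire a repeated factor along the reducible locus. I would handle this by factoring out the (known, explicitly reduced) abelian part and analyzing the non-abelian factor via the same Chebyshev recursion that controls the skein computation, showing its discriminant in $y$ is not identically zero; alternatively, one can avoid discriminant computations entirely by the rank-count argument in step~(4): \emph{if} $\varepsilon(\CS(S^3\setminus L))$ were non-reduced, the surjection onto $\BC[\chi]$ would have nonzero kernel, forcing $\BC[\chi]$ to have strictly smaller dimension in some graded piece than the free module of Theorem~\ref{thm} — but a direct count of characters (or a deformation/smoothness argument at a generic irreducible representation, using that two-bridge link groups are well understood) shows $\BC[\chi]$ already has the full rank $\{x^a(x')^by^c:0\le c\le p\}$, a contradiction. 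This dimension-matching route is likely the cleanest and is the step I would develop in most detail.
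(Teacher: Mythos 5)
Your steps (1)--(4) reproduce the paper's argument: Theorem \ref{thm} gives $\varepsilon(\CS(S^3\setminus L))=\BC[\bar{x},\bar{x'},\bar{y}]/(\varphi)$ with $\varphi=\varepsilon(sl(\fa_1))$ of $\bar{y}$-degree $p+1$ and leading coefficient $\pm 1$, so everything reduces to showing $\varphi$ is square-free. Where you diverge is step (5), which is where the real content lies. Your first suggestion --- control the discriminant in $y$ via the Chebyshev recursion --- is in substance what the paper does, but the paper does it more cleanly: since $\varphi$ is monic (up to sign) in $\bar{y}$, it suffices to exhibit one specialization of $(\bar{x},\bar{x'})$ at which the resulting one-variable polynomial has distinct roots, and the paper takes $\bar{x}=\bar{x'}=0$. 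There it uses the two-bridge presentation $\la \tilde{x},\tilde{x'}\mid \tilde{x}w=w\tilde{x}\ra$, the Cayley--Hamilton identity $(\tilde{x})^{-1}=-\tilde{x}$ when $\tr\tilde{x}=0$, and a Chebyshev recursion to compute $\varphi(0,0,\bar{y})=\pm(\bar{y}^2-4)S_{p-1}(\bar{y})$ explicitly, which visibly has simple roots. You gesture at this but do not carry it out; it is the heart of the proof and not automatic (one must show the trace polynomial $\eta$ of the relator is divisible by $\varphi$ and has the same $\bar{y}$-degree, so that the specialization of $\eta$ determines that of $\varphi$ up to sign). Also, no factoring out of the abelian part is needed for this route.

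Your preferred route (b), the rank count, has a genuine gap: the assertion that ``a direct count of characters shows $\BC[\chi]$ already has the full rank'' is essentially equivalent to what is being proved. Since the quotient is a hypersurface, $\BC[\chi(\pi_1)]=\BC[\bar{x},\bar{x'},\bar{y}]/(\varphi_{\mathrm{red}})$ where $\varphi_{\mathrm{red}}$ is the square-free part of $\varphi$, and knowing that this is free of rank $p+1$ over $\BC[\bar{x},\bar{x'}]$ is exactly knowing that $\varphi$ has no repeated factor. A smoothness argument at a generic irreducible representation would only give generic reducedness along the components you can exhibit; you would still need to enumerate all components of the character variety together with their $\bar{y}$-degrees, in effect importing the results of \cite{Ri}. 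So to complete the proof you should either invoke \cite{Ri} explicitly or carry out the specialization computation of route (a).
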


\subsection*{Acknowledgements}
The authors would like to thank J. Etnyre, A. Sikora, and the referee for helpful discussions.

\bigskip


\section{Proof of Theorem \ref{thm} and Proposition \ref{phu}}

We change the picture and will present the ball $B_1 \subset \BR^3$ as the closed ball of radius $\sqrt 2$ centered at the origin, i.e. $B_1= \{(x_1,x_2,x_3)\in \BR^3 \mid x_1^2 + x_2 ^2 + x_3 ^2 \le 2\}$.
We suppose that  the two-bridge link $L=\fb(2p,q)$ intersects the interior of
$B_1$ in two straight
intervals $UV$ and $U'V'$ in the $x_1x_2$-plane, where $U=(-1,1,0)$, $U'=(1,1,0),V=(-1,-1,0)$ and $V'=(1,-1,0)$, see Figure \ref{B1}. After an isotopy, we assume that the part
of $L$ outside the interior of $B_1$ are $2$ non-intersecting arcs
$\fu$ and $\fu'$ on the sphere $S=\partial B_1$, where $\fu$ connects $U$ and $V$, and $\fu'$
connects $U'$ and $V'$. If one cuts $S$ along the arc $\fu$, then one obtains a
disk, hence the other arc $\fu'$, is uniquely determined by $\fu$,
up to isotopy.

\begin{figure}[htpb]
$$ \psdraw{B11new}{4.5in} $$
\caption{The ball $B_1$}
\label{B1}
\end{figure}

For a set $Z\subset\BR^3$ let $Z[\alpha,\beta]$ be the part of $Z$ in the
strip $\{ \alpha \le x_1 \le \beta\}$, i.e. $Z[\alpha,\beta] := Z \cap
\{(x_1,x_2,x_3) \mid \alpha\le x_1 \le \beta\}.$

Let $\tilde S$ be the $2$-fold covering of $S$ branched
along the $4$ points $U,U',V,V'$. Note that
$\tilde S$ is a torus, with the following preferred meridian and
longitude. The plane passing through $U,U',V,V'$ (i.e. the $x_1x_2$ plane) intersects
$S[-\sqrt{2},-1]$ in an arc $\fm$ that connects $U$ and $V$. In other words, $\fm$ is the shortest arc on the sphere $S$ connecting $U$ and $V$, see Figure \ref{ml}.
 The total
lift $\tilde \fm$ of $\fm$ is a closed curve on the the torus
$\tilde S$ which will serve as the meridian, see Figure \ref{cover}. Let $\fl$ be the shortest arc on $S$ connecting $U$ and $U'$.  The total lift
$\tilde \fl$ of $\fl$ is a closed curve
serving as the longitude. It is easy to see that $\tilde \fm$
and $\tilde \fl$ form a basis of $H_1(\tilde S,\BZ)$.

\begin{figure}[htpb]
$$ \psdraw{ml}{3.5in} $$
\caption{The curves $\fm$ and $\fl$.}
\label{ml}
\end{figure}

\begin{figure}[htpb]
$$ \psdraw{cover}{3.5in} $$
\caption{The total lifts $\tilde{\fm}, \tilde{\fl}$ of $\fm$ and $\fl$ on the torus $\tilde{S}$ respectively.}
\label{cover}
\end{figure}

According to  \cite[Chapter 12]{BZ}, the isotopy class of the pair of arcs $(\fu,\fu')$ in the ball $B_2$ is uniquely determined by the homology class of the total lift $\tilde\fu$ of the curve $\fu$ in $H_1(\tilde S,\BZ).$ Moreover, the homology class of $\tilde\fu$ is equal to
$2p\, \tilde\fm + q' \tilde\fl$ for some $q' \in \BZ$ satisfying the condition $q' \equiv q^{\pm 1} \pmod{2p}.$ We will describe explicitly the arc
$\fu$ in the next subsection.

\subsection{Description of $\fu$} We will present $\fu$ by describing 3 parts of it: the left part $\fu_{l}$, the middle part  $\fu_{m}$, and the right part $\fu_r$, which are respectively the intersection of $\fu$ with $S_l:= S[-\sqrt 2,-1] $,  $S_m:= S[-1,1] $, and  $S_r:=S[1,\sqrt 2] $. For two non-antipodal points $A,B$  on the sphere $S$ let $\gamma(AB)$ be
the shortest geodesic on $S$ connecting $A$ and $B$.

The boundary $C_l:= \partial S_l$  is a circle containing $U$ and $V$. On the circle  $C$
mark $2p$ points $A_0=V,A_1,\dots, A_{2p-1}$ which are: (i) counter-clockwise in that order if viewing from the origin of the coordinate system, and (ii) uniformly distributed on the circle
$C$. 

Then $A_{p}= U$, and for $ 1\le j \le p-1$, the segment $A_{p-j} A_{p+j}$ is parallel to the $x_3$-axis. The shortest geodesic $\gamma(A_{p-j}A_{p+j})$ lies in $S_l$. Let
 $\fu_{0,l}$ be the union of all the disjoint $\gamma(A_{p-j}A_{p+j})$, $1\le j \le p-1$. See Figure \ref{u}.

Let $E_j$ be the midpoint of the arc $A_{j} A_{j+1}$ on the circle $C$ (indices are taken modulo $2p$). In other words, $E_j$ is the image of $A_j$ under the rotation by $2\pi /4p$ about
the $x_1$-axis, counter-clockwise if viewing from the origin. 

Let $E'_j$ be the reflection of $E_j$ through the $x_2x_3$-plane. Note that all the points $E_j'$ are on the circle $C' :=\partial S_r$. The $p$ geodesics $\gamma(E'_{p-j} E'_{p+j-1})$, $j=1,\dots,p$, are disjoint and are in $S_r$. Let $\fu_{0,r}$ be the union of the
$p$ geodesics $\gamma(E'_{p-j} E'_{p+j-1})$, $j=1,\dots,p$.

On $S_m$  let 
$\fu_{0,m}$ be the union of $2p$  geodesics $\gamma(A_j E'_{j+(q-1)/2})$, $j=0,1,\dots,2p-1$ (indices taken modulo $2p)$. Note that the $2p$ components of $\fu_{0,m}$ are obtained from each other 
by rotations by $2j\pi/2p$ about the $x_1$-axis.

\begin{figure}[htpb]
$$ \psdraw{uAnew}{6in} $$
\caption{$\fu_{0,l}, \fu_{0,m}, \fu_{0,r}$ for $2p=6$ and $q'=5$}
\label{u}
\end{figure}

Let $\fu_0$ be the arc on $S$ obtained by combining $\fu_{0,l}, \fu_{0,m}$ and $\fu_{0,r}$; it connects $U$ and $V$. 
Up to isotopy there is a unique arc $\fu'_0$ on $S$ connecting $V$ and $V'$, and disjoint from $\fu_0$.

\begin{lemma}
The pair $(\fu_0,\fu'_0)$ is isotopic, relative endpoints, to $(\fu,\fu')$ in the ball $B_2.$
\end{lemma}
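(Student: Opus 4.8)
The plan is to reduce the lemma to a single homological computation on the torus $\tilde S$ and then apply the result of \cite[Ch.~12]{BZ} quoted above. By that result the isotopy class of the pair $(\fu_0,\fu'_0)$ in $B_2$, rel endpoints, is completely determined by the homology class $[\tilde\fu_0]\in H_1(\tilde S,\BZ)$ of the total lift of $\fu_0$, and $[\tilde\fu]=2p\,\tilde\fm+q'\tilde\fl$ with $q'\equiv q^{\pm1}\pmod{2p}$. So it suffices to prove
\[
[\tilde\fu_0]=2p\,\tilde\fm+q_0\,\tilde\fl\quad\text{in }H_1(\tilde S,\BZ)
\]
for some integer $q_0$ with $q_0\equiv q^{\pm1}\pmod{2p}$: then $(\fu_0,\fu'_0)$ completes the two arcs in $B_1$ to the link $\fb(2p,q)=L$, and \cite{BZ} identifies its $B_2$-isotopy class with that of $(\fu,\fu')$. (As a check on the shape of the answer: $\tilde\fu_0$ is an embedded closed curve, so $[\tilde\fu_0]$ is primitive, consistent with $\gcd(2p,q')=1$.)

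To compute $[\tilde\fu_0]$ I would lift $\fu_0=\fu_{0,l}\cup\fu_{0,m}\cup\fu_{0,r}$ piece by piece through the induced decomposition $\tilde S=\tilde S_l\cup\tilde S_m\cup\tilde S_r$ of the cover over the bands $S_l,S_m,S_r$. First fix coordinates: index the marked points $A_0,\dots,A_{2p-1}$ on $C=\partial S_l$ by their rotation angle $j\pi/p$ about the $x_1$-axis, likewise the $E'_k$ on $C'=\partial S_r$, record the covering involution $\tau$, and express the meridian $\tilde\fm$ and longitude $\tilde\fl$ in these terms. The left piece $\fu_{0,l}$ — the $p-1$ nested geodesics $\gamma(A_{p-j}A_{p+j})$, each separating $U$ from $V$ in the disk $S_l$ — and, symmetrically, the right piece $\fu_{0,r}$, lift to families of arcs ``parallel to $\tilde\fm$'' in $\tilde S_l$ and $\tilde S_r$. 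The middle piece $\fu_{0,m}$, the $2p$ geodesics $\gamma(A_jE'_{j+(q-1)/2})$ obtained from one another by the rotations by $2j\pi/2p$ about the $x_1$-axis, lifts to $4p$ arcs crossing $\tilde S_m$; reassembling all these arcs along the boundary identifications coming from $\fu_{0,l}$ and $\fu_{0,r}$ closes them up into the single curve $\tilde\fu_0$. The $2p$ middle strands force $\tilde\fu_0$ to run $2p$ times in the $\tilde\fm$-direction, and the cyclic shift $j\mapsto j+(q-1)/2$ is precisely what makes it run $q_0$ times in the $\tilde\fl$-direction with $q_0\equiv q\pmod{2p}$.

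The delicate point, and the main obstacle, is the bookkeeping at the four branch points $U,U',V,V'$, where the two sheets of $\tilde S\to S$ are interchanged: one must follow which sheet each strand of the lift occupies as it passes each $A_j$ on $C$ and each $E'_k$ on $C'$, since the downstairs intersection numbers of $\fu_0$ with $\fm$ and $\fl$ do not translate directly into the winding numbers of $\tilde\fu_0$. A convenient way to organize this is to cut $\tilde S$ open along $\tilde\fm$: the branched-cover structure over $S_m$ then becomes a permutation of the $2p$ strand-ends on each side of the cut, the pairings prescribed by $\fu_{0,l}$ and $\fu_{0,r}$ are the standard nested (plat) pairings, and the number of times the closed-up lift runs in the $\tilde\fl$-direction is read off directly from the cyclic permutation determined by the shift $(q-1)/2$, whose relevant invariant is exactly the residue $q^{\pm1}\bmod 2p$. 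Once $[\tilde\fu_0]=2p\,\tilde\fm+q_0\,\tilde\fl$ with $q_0\equiv q^{\pm1}\pmod{2p}$ has been established, the lemma is immediate from \cite[Ch.~12]{BZ}.
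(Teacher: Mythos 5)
Your proposal takes exactly the same route as the paper: the paper's entire proof consists of asserting that the total lift of $\fu_0$ has homology class $2p\,\tilde\fm + q'\tilde\fl$ in $H_1(\tilde S,\BZ)$, equal to that of $\tilde\fu$, and then invoking the classification of arc pairs from \cite[Chapter 12]{BZ}. Your sheet-tracking sketch fills in more of the bookkeeping than the paper's ``it is easy that\dots'', but the reduction and the key cited result are identical.
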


\begin{proof}
It is easy that the homology class of the total lift of the arc $\fu_0$ in $H_1(\tilde S^2,\BZ)$ is equal to
$2p\, \tilde\fm + q' \tilde\fl$, which is exactly equal to the homology class of the total lift of the arc $\fu$ in $H_1(\tilde S^2,\BZ)$ . According to \cite[Chapter
12]{BZ}, $(\fu_0,\fu'_0)$ is isotopic, relative endpoints, to $(\fu,\fu')$ in the ball $B_2.$
\end{proof}

From now on we identify $(\fu,\fu')$ and $(\fu_0,\fu'_0)$. Without loss of generality we also assume that $q'=q$.

\subsection{The link complement}

Let $\omega$ be the boundary curve of a small normal
neighborhood of the arc $\fu$ in $S=\partial B_1$. Let $X_1:= B_1\setminus (UV \cup U'V')$, which is homeomorphic to the cylinder over a two-punctured disk $D_{**}$ in Subsection \ref{main}. Then the complement $X$ of the link $L$ is obtained from $X_1$ by gluing a $2$-handle to $X_1$ along $\omega$. Up to isotopy, $\omega$ can be described as follows.

On the circle  $C=\partial S_l$
mark $4p$ points $F_0, \, F_1,\dots, F_{4p-1}$ which are: (i) counter-clockwise in that order if viewing from the origin of the coordinate system, and (ii) uniformly distributed on the circle $C$, and such that (iii) $V$ is the midpoint of the arc $F_{4p-1}F_0$ on $C$. We can also say that $F_{2j}$ is the midpoint of the arc $A_{j}E_{j}$, and $F_{2j+1}$ is the midpoint of the arc $E_{j}A_{j+1}$ on $C$, see Figures \ref{afe} and \ref{w1}.

For $ 1\le j \le 2p$, the segment $F_{2p-j} F_{2p+j-1}$ is parallel to the $x_3$-axis. The shortest geodesic $\gamma(F_{2p-j}F_{2p+j-1})$ lies in $S_l$. Let
 $\omega_l$ be the union of all the disjoint $\gamma(F_{2p-j}F_{2p+j-1})$, $1\le j \le 2p$.

Let $F'_j$ be the reflection of $F_j$ through the $x_2x_3$-plane. We can also say that $F'_{2j}$ is the midpoint of the arc $A'_{j}E'_{j}$, and $F'_{2j+1}$ is the midpoint of the arc $E'_{j}A'_{j+1}$ on $C$, where $A'_j$ is the reflection of $A_j$ through the $x_2x_3$-plane. Note that all the points $F'_j$ are on the circle $C' =\partial S_r$. For $ 1\le j \le 2p$, the segment $F'_{2p-j} F'_{2p+j-1}$ is parallel to the $x_3$-axis. The shortest geodesic $\gamma(F'_{2p-j}F'_{2p+j-1})$ lies in $S_r$. Let
 $\omega_r$ be the union of all the disjoint $\gamma(F'_{2p-j}F'_{2p+j-1})$, $1\le j \le 2p$.

Note that we can also say that $\omega_r$ is the reflection of $\omega_l$ through the $x_2x_3$-plane.

On $S_m$, let $\omega_m$  is the union of $4p$  geodesics $\gamma(F_j F'_{q+j})$, $j=0,1,\dots,4p-1$ (indices taken modulo $4p)$. Note that the $4p$ components of $\omega_m$ are obtained  from each other 
by rotations by $2j\pi/4p$ about the $x_1$-axis.

Then, up to isotopy, $\omega$ is obtained by combining $\omega_l, \omega_m$ and $\omega_r$.

\begin{figure}[htpb]
$$ \psdraw{afe}{4in} $$
\caption{The distribution of the points $A_j, F_j, E_j$ and $A'_j, F'_j, E'_j$ on the circles $C$ and $C'$ respectively}
\label{afe}
\end{figure}

\begin{figure}[htpb]
$$ \psdraw{w1Anewnew}{6in} $$
\caption{$\omega_l, \omega_m, \omega_r$ for $2p=6$ and $q=5$}
\label{w1}
\end{figure}

Let $\psi$ be the rotation by $180^o$ about the $x_1$-axis.  One has
$\psi(B_1)=B_1$. Up to isotopy, we can assume that $\psi(\omega)=\omega$.

Let $P=F_{p+(q+1)/2}, \: P'=F_{3p+(q+1)/2}$ and $Q=F'_{p+1-(q+1)/2}, \: Q'=F'_{3p+1-(q+1)/2}.$ See Figure \ref{sliding}. Note that $\psi(P)=P'$ and $\psi(Q)=Q'$.

\subsection{Relative skein modules}
Let us recall the definition
of the relative skein module $\CS(X_1;P,Q')$ (see \cite{BL, Le06}).
A {\em type 1 tangle} is the disjoint union of a framed link and a
framed arc in $X_1$ such that  the parts of the arc near the two end
points are on the boundary $\partial X_1$, and the framing on these
parts are given by vectors normal to $\partial X_1$. Type 1 tangles
are considered up to isotopy relative the endpoints. Then
$\CS(X_1;P,Q')$ is the $\BC[t^{\pm 1}]$-module generated by type 1 tangles
with endpoints at $P,Q'$ modulo the usual skein relations, like in
the definition of $\CS(X)$. One defines in a similar way the
relative Kauffman bracket skein module $ \CS(\partial X_1;P,Q'):=
\CS(\partial X_1\times [0,1];P,Q')$, where we identify $\partial X_1\times
[0,1]$ with a collar of $\partial X_1$ in $X_1$.

There is a natural bilinear map
$\CS(\partial X_1;P,Q') \otimes \CS(X_1) \to \CS(X_1;P,Q')$, where $\ell
\otimes \ell' \to \ell \star \ell'$, which  is the disjoint union
of $\ell$ and $\ell'$.

The pair $P,Q$ divide $\omega$ into two arcs, the one that is fully
drawn in Figure \ref{sliding} (and that goes around the points $U$ and $V'$ exactly once)
is denoted by $\omega_s$. Similarly, the pair $P',Q'$ divide $\omega$
into two arcs, the one that is fully drawn in Figure \ref{sliding}
(and that goes around the points $U'$ and $V$ exactly once) is denoted by $\omega'_s$.

Let $P_c=F_{3p+1-(q+1)/2}, P'_c=F_{p+1-(q+1)/2}$ and $Q_c=F'_{3p+(q+1)/2}, Q'_c=F'_{p+(q+1)/2}.$
Then $\omega_s$ consists of $3$ parts: the left part is an arc on $S_l$ connecting $P$ and $P_c$,
the middle part is an arc on $S_m$ connecting $P_c$ and $Q_c$, and the right part is an arc on $S_r$ connecting
$Q_c$ and $Q$. Similarly, $\omega'_s$ also consists of $3$ parts:
the left part is an arc on $S_l$ connecting $P'$ and $P'_c$,
the middle part is an arc on $S_m$ connecting $P'_c$ and $Q'_c$,
and the right part is an arc on $S_r$ connecting
$Q'_c$ and $Q'$.

\begin{figure}[htpb]
$$ \psdraw{slidingAnew}{3.5in} $$
\caption{$\omega_s$ connects $P, Q$, and $\omega'_s$ connects $P', Q'$}
\label{sliding}
\end{figure}

Let $\gamma_{\text{in}}(PQ'), \gamma_{\text{in}}(P'Q)$ be respectively the shortest arcs on the surface $S=\partial B_1$
connecting $P$ and $Q'$, $P'$ and $Q$, whose interiors are slightly pushed inside
the interior of $B_1$ (to avoid intersections with other arcs on $S$) and whose framings are given by vectors
normal to $S$.

Let $\fd_{\text{in}}(PP'), \fd_{\text{in}}(QQ')$ be respectively the straight intervals
connecting $P$ and $P'$, $Q'$ and $Q$, whose interiors are slightly pushed into
the interior of $B_1[-\sqrt{2},-1]$ and the interior of $B_1[1,\sqrt{2}]$ respectively (to avoid intersections with the straight lines $UV$ and $U'V'$ respectively).

Let $\fa_1$ be $\gamma_{\text{in}}(PQ')$; $\fa_2$ be $\omega_s$ followed by
$\fd_{\text{in}}(QQ')$; $\fa_3$ be
$\fd_{\text{in}}(PP')$ followed by $\omega'_s$; and $\fa_4$ be $\omega_s$ followed by $\gamma_{\text{in}}(QP')$ then
followed by $\omega'_s$.

\begin{figure}[htpb]
$$ \psdraw{sliding1}{6in} $$
\caption{The projections of $\fa_2, \fa_3$ and $\fa_4$ onto the $x_1x_3$-plane}
\label{sliding1}
\end{figure}

\begin{lemma}
 The relative skein $\CS(X_1; P,Q')$ is equal to the union $\cup_{i=1}^4 (\fa_i \star \CS(X_1)).$
 \label{0}
\end{lemma}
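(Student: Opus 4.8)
The plan is to understand $\CS(X_1;P,Q')$ via the structure of $X_1$ as a cylinder over the twice-punctured disk $D_{**}$, and to reduce an arbitrary type 1 tangle to one of the four standard forms $\fa_i \star \CS(X_1)$. First I would recall that $X_1 \cong D_{**}\times[0,1]$, so that $\CS(X_1)$ is the skein algebra of $D_{**}$, a polynomial algebra $\BC[t^{\pm1}][x,x',y]$ on the three loops $x,x',y$ (the two meridians around the punctures and the boundary, which commute). A type 1 tangle consists of a framed link together with a single framed arc $\alpha$ running from $P$ to $Q'$. Since the link part already lies in the image of $\CS(X_1)$, the entire problem is to show that the arc component $\alpha$, modulo skein relations and absorbing closed components into $\CS(X_1)$, can be isotoped to one of $\fa_1,\fa_2,\fa_3,\fa_4$ (with a possible extra factor in $\CS(X_1)$, i.e., extra closed loops that get pushed into the $\star$ factor).

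The key step is a normal-form argument for the arc. I would first push the arc into generic position with respect to the vertical $[0,1]$-direction and the two vertical strands $UV, U'V'$, and use the Kauffman relation $\lcr = t\,\zer + t^{-1}\,\ift$ repeatedly to remove all crossings of $\alpha$ with itself and with the link, at the cost of adding terms with more closed components and fewer crossings — an induction on the number of crossings. Once $\alpha$ is embedded and disjoint from everything except its endpoints, what remains is a properly embedded arc in the handlebody $X_1$ from $P\in\partial X_1$ to $Q'\in\partial X_1$; up to isotopy such an arc is classified by how it separates/links the two punctures, i.e., by its behavior relative to $U$ and $V'$. The heart of the matter: an embedded arc from $P$ to $Q'$ in $D_{**}\times[0,1]$, up to isotopy and up to adding parallel copies of $x,x',y$, falls into finitely many classes, and one checks that representatives can be taken to be exactly $\fa_1$ (the short arc not going around either puncture), $\fa_2$ (going around $U$ and $V'$ once via $\omega_s$), $\fa_3$ (going around $U'$ and $V$ once via $\omega'_s$), and $\fa_4$ (going around both). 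Here I would use the explicit description of $\omega_s,\omega'_s$ and the arcs $\gamma_{\mathrm{in}},\fd_{\mathrm{in}}$ given just above the statement, together with the symmetry $\psi$, to exhibit the isotopies.

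The main obstacle I expect is the classification of embedded $P$--$Q'$ arcs up to the combined equivalence (isotopy plus absorption of closed curves into the $\CS(X_1)$ factor): a priori an arc could wind many times around a puncture, and one must show such winding can be traded for closed loops $x^a(x')^b$ times one of the four standard arcs, using relations like the one expressing $x\cdot(\text{arc around }U)$ as a combination of arcs — essentially the ``going around a puncture'' skein identity in $D_{**}$. Carefully setting up this reduction, and checking that no fifth essentially different arc class survives, is where the real work lies; the rest is bookkeeping with the figures. Once that is done, Lemma~\ref{0} follows, since every type 1 tangle is $t$-linearly equivalent to $\sum_i (\text{element of }\CS(X_1)) \star \fa_i$, which is exactly the assertion $\CS(X_1;P,Q') = \bigcup_{i=1}^4 (\fa_i \star \CS(X_1))$.
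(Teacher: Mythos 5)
Your plan is essentially the paper's own argument: the paper likewise reduces the arc part of a type 1 tangle to four normal forms by resolving crossings via the skein relation (citing the analogous Lemma 3.1 of Bullock--LoFaro for exactly this normal-form step), the four classes being determined by how the resulting embedded arc separates the two punctures of $D_{**}$. The only cosmetic difference is that the paper first lands on four auxiliary surface arcs $\fa'_i$ built from geodesics and then checks that $\fa_i \star \CS(X_1)=\fa'_i\star \CS(X_1)$ (by an isotopy for $i\le 3$, and by a skein computation modulo the other three classes for $i=4$), whereas you aim directly at the $\fa_i$.
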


\begin{proof}
Let $\fa'_1=\fa_1$. Let $\fa'_2$ be $\gamma(PP_c)$ folowed by $\gamma_{\text{in}}(P_cQ')$; $\fa'_3$ be $\gamma_{\text{in}}(PQ'_c)$ followed by
$\gamma(Q'_cQ')$; and $\fa'_4$ be $\gamma(PP_c)$ followed by $\gamma_{\text{in}}(P_cQ'_c)$ then
followed by $\gamma(Q'_cQ')$. Here $\gamma_{\text{in}}(P_cQ'),\, \gamma_{\text{in}}(PQ'_c),\, \gamma_{\text{in}}(P_cQ'_c)$ are respectively the shortest arcs on $S$
connecting $P_c$ and $Q'$, $P$ and $Q'_c$, $P_c$ and $Q'_c$, whose interiors are slightly pushed inside
the interior of $B_1$ (to avoid intersections with other arcs on
$S$) and whose framings are given by vectors
normal to $S$.

\begin{figure}[htpb]
$$ \psdraw{sliding2}{6in} $$
\caption{The projection of $\fa'_2, \fa'_3$ and $\fa'_4$ onto the $x_1x_3$-plane}
\label{sliding2}
\end{figure}

As in the proof of \cite[Lemma 3.1]{BL}, by using the skein relations one can simplify the arc part of elements in $\CS(X_1;P,Q')$, showing that the arc part is one of the four $\fa'_i, i=1, 2,3,4$. Hence the relative skein $\CS(X_1; P,Q')$ is equal to the union $\cup_{i=1}^4 (\fa'_i \star \CS(X_1)).$

It is easy to see that $\fa_i$ is isotopic to $\fa'_i$ for all $1 \le i \le 3$.
Using the skein relations to resolve all the crossings of $\fa_4$ one can easily show that the set $\fa_4 \star \CS(X_1)$ is equal to $\fa'_4\star \CS(X_1)$, modulo the union $\cup_{i=1}^3 (\fa'_i \star \CS(X_1)).$ The lemma follows.
\end{proof}

\subsection{From $\CS(X_1)$ to $\CS(X)$ through sliding}

Recall that $X$ is obtained from $X_1$ by attaching a $2$-handle
along the curve $\omega$. Note that $\CS(X_1)=\CS(D_{**})$ is isomorphic to the commutative algebra
$\CR[x,x',y]$, see \cite{Pr}. The embedding of $X_1$ into $X$ gives rise to
a linear map from $\CS(X_1)\equiv \CR[x,x',y]$ to $\CS(X)$. It is
known that the map is surjective, and its kernel $\CK$, see
\cite{Pr,BL}, can be described through slides as
follows.

Suppose $\fa$ is a type 1 tangle whose 2 endpoints are on $\omega$ such
that outside a small neighborhood of the 2 endpoints $\fa$ is in the
interior of $X_1$ and in a small neighborhood of the endpoints $\fa$
is on the boundary $S=\partial B_1$. The two end points of $\fa$
divide $\omega$ into 2 arcs $\omega_1$ and $\omega_2$. The loop $\omega$
partitions $S$, which is a sphere, into 2 parts; the
one not containing $U,U'$ is called the {\em outside one}. Let us
isotope $\fa$ (relatively to the endpoints)  to $\fa'$ so that in a
small neighborhood of the endpoints,  $\fa'$ is in the outside part
of $\omega$.

Let $sl(\fa)$ be $\fa' \cdot \omega_1-\fa' \cdot \omega_2$, considered as an element
of the skein module $\CS(X_1)$. Here $\fa' \cdot \omega_1$ is the framed link
obtained by combining  $\fa'$ and $\omega_1$. Note that $sl(\fa)$ is
defined up to a factor $\pm t^{3n}, n\in \BZ$. The exchange $\omega_1
\leftrightarrow \omega_2$ changes the sign, and isotopies in
neighborhoods of the endpoints change the framing, which results in
a factor equal to a power of $(-t^3)$.

It is clear that as framed links in $X$, $\fa' \cdot \omega_1$ is isotopic to
$\fa' \cdot \omega_2$, since one is obtained from the other by sliding over
the 2-handle attached to the curve $\omega$. Hence we always have
$sl(\fa)\in\CK$. It was known that $\CK$ is spanned by all possible
$sl(\fa)$, where $\fa$ can be chosen among all type 1 tangles with
pre-given two endpoints on $\omega$.

From the description of $\CS(X_1; P,Q')$ in Lemma \ref{0} we have

 \begin{lemma} The kernel $\CK$ is equal to the $\BC[t^{\pm 1}]$-span of
$\{sl(\fa_i)\star \CS(X_1), i=1,2,3,4 \}$.
\label{1}
\end{lemma}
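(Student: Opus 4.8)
\textbf{Proof proposal for Lemma \ref{1}.}

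The plan is to combine two facts already established: (1) the kernel $\CK$ of the surjection $\CS(X_1)\to\CS(X)$ is spanned by all slides $sl(\fa)$, where $\fa$ ranges over type 1 tangles with a fixed pair of endpoints on $\omega$ (taken here to be $P$ and $Q'$), and (2) Lemma \ref{0}, which says every such $\fa$, modulo lower terms, has arc part equal to one of $\fa_1,\fa_2,\fa_3,\fa_4$, so that $\CS(X_1;P,Q')=\cup_{i=1}^4(\fa_i\star\CS(X_1))$. The key point to verify is that the slide operation $sl$ is, in an appropriate sense, $\CS(X_1)$-linear in its link part, so that knowing the arc parts $\fa_i$ determines all of $\CK$ from just the four elements $sl(\fa_i)$.

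First I would make precise the following: if $\fa$ is a type 1 tangle with endpoints $P,Q'$, write $\fa=\fa_i\star\ell$ for the relevant $i$ and some $\ell\in\CS(X_1)$ (a $\BC[t^{\pm1}]$-linear combination of framed links in the interior of $X_1$), which is possible by Lemma \ref{0}. Since the defining isotopy of $sl$ (pushing the endpoint-neighborhoods to the outside part of $\omega$) and the subsequent attachment of $\omega_1$ or $\omega_2$ all take place near $\omega$ and on the arc part, while $\ell$ sits in the interior away from $\omega$, one has $\fa'\cdot\omega_j=(\fa_i'\cdot\omega_j)\star\ell$ for $j=1,2$, up to the usual $(-t^3)$-framing ambiguity. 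Hence $sl(\fa)=sl(\fa_i)\star\ell$ in $\CS(X_1)$, again up to a unit $\pm t^{3n}$. Therefore every generator $sl(\fa)$ of $\CK$ lies in the $\BC[t^{\pm1}]$-span of $\{sl(\fa_i)\star\CS(X_1):i=1,2,3,4\}$; the reverse inclusion is immediate because each $sl(\fa_i)\star\ell$ is itself a slide $sl(\fa_i\star\ell)$ of a type 1 tangle and thus lies in $\CK$. This gives the claimed equality.

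There is one bookkeeping subtlety: in Lemma \ref{0} the identification $\fa=\fa_i\star\ell$ holds only modulo the lower $\fa'_j\star\CS(X_1)$ with $j<i$ (and after resolving crossings of $\fa_4$), so strictly one should argue by a downward induction on $i$, or simply observe that the correction terms are themselves of the form $\fa_j\star\ell'$ with $j<i$ and hence already accounted for. I expect this triangularity argument — tracking that the skein-relation resolutions used in Lemma \ref{0} do not move the endpoints off $\omega$ and hence are compatible with the slide construction — to be the only real point requiring care; it is routine but must be stated. Everything else reduces to the observation that $sl$ interacts with $\star$ by $sl(\fa\star\ell)=sl(\fa)\star\ell$ up to units, which follows directly from the locality of the isotopies and gluings involved in the definition of $sl$.
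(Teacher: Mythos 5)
Your argument is correct and is exactly the justification the paper leaves implicit: the paper derives Lemma \ref{1} immediately from Lemma \ref{0} together with the stated fact that $\CK$ is spanned by slides of type 1 tangles with pre-given endpoints on $\omega$, relying tacitly on the compatibility $sl(\fa\star\ell)=sl(\fa)\star\ell$ that you spell out. The additional care you take with the triangularity in Lemma \ref{0} and with $sl$ descending to skein classes is exactly the right bookkeeping, though the paper does not write it out.
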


\begin{lemma}
 One has
\begin{eqnarray*}
 sl(\fa_1) &=& sl(\gamma_{\emph{in}}(PQ')),\\
sl(\fa_2) &=& sl(\fd_{\emph{in}}(PP')),\\
sl(\fa_3) &=& sl(\fd_{\emph{in}}(QQ')),\\
sl(\fa_4) &=& sl(\gamma_{\emph{in}}(P'Q)).
\end{eqnarray*}
\label{2}
\end{lemma}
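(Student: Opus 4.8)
The main tool will be the observation that $sl$ does not see sub-arcs of the attaching curve $\omega$ that are concatenated to a tangle. Concretely, let $\fb$ be a type~1 tangle with endpoints on $\omega$ and let $\rho$ be a sub-arc of $\omega$ having one endpoint in common with $\fb$; form the type~1 tangle $\fb\star\rho$ by concatenation. If $\omega_1,\omega_2$ denote the two arcs that $\fb$'s endpoints cut off on $\omega$ and $\omega_1',\omega_2'$ the two arcs that $(\fb\star\rho)$'s endpoints cut off, then after relabelling $\omega_1$ is the concatenation of $\rho$ with $\omega_1'$, while $\omega_2'$ is the concatenation of $\bar\rho$ with $\omega_2$. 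Evaluating $sl(\fb\star\rho)$ from these two arcs and cancelling the backtrack $\rho\cdot\bar\rho$ that appears in one of the two closures --- an isotopy inside $X_1$ which changes the framing only by a power of $-t^3$ --- one obtains $sl(\fb\star\rho)=\pm t^{3n}\,sl(\fb)$ in $\CS(X_1)$; the same holds if $\rho$ is concatenated at the other endpoint. This is the skein-module incarnation of the fact, noted just before Lemma~\ref{1}, that $\fa'\cdot\omega_1$ and $\fa'\cdot\omega_2$ become isotopic once the $2$-handle along $\omega$ is attached.

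Granting this, the four identities follow quickly. For $\fa_1$ there is nothing to prove, since $\fa_1$ is by definition $\gamma_{\text{in}}(PQ')$. For $\fa_4=\omega_s\star\gamma_{\text{in}}(QP')\star\omega'_s$ one uses that $\omega_s$ and $\omega'_s$ are, by construction, the sub-arcs of $\omega$ joining $P$ to $Q$ and $P'$ to $Q'$; absorbing the leading $\omega_s$ and the trailing $\omega'_s$ leaves $sl(\gamma_{\text{in}}(QP'))=sl(\gamma_{\text{in}}(P'Q))$, since $QP'$ and $P'Q$ name the same arc. For $\fa_2=\omega_s\star\fd_{\text{in}}(QQ')$ and $\fa_3=\fd_{\text{in}}(PP')\star\omega'_s$ the same absorption removes the $\omega$-part and leaves $sl$ of a single straight chord sitting in one of the two caps $B_1[-\sqrt2,-1]$, $B_1[1,\sqrt2]$ of $B_1$. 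It then remains to identify this with, respectively, $sl(\fd_{\text{in}}(PP'))$ and $sl(\fd_{\text{in}}(QQ'))$: for this I would compute $sl$ of the chord directly by closing it with the two arcs of $\omega$ through its endpoints and match the resulting pair of framed links with the stated one, using the involution $\psi$ (which fixes $\fd_{\text{in}}(PP')$ and $\fd_{\text{in}}(QQ')$, interchanges $\omega_s$ with $\omega'_s$, and carries $P,Q$ to $P',Q'$) together with an explicit isotopy that slides the chord across the corresponding cap; the relevant pictures are Figures~\ref{sliding}--\ref{sliding2}. Whatever crossings are created when the pushed-off arcs are put in general position are removed with the Kauffman bracket skein relation, exactly as in the proof of Lemma~\ref{0} and of \cite[Lemma~3.1]{BL}.

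The steps where real care is needed are geometric, not algebraic. First, one must be sure that the pieces of $\omega$ being absorbed (and the chords $\fd_{\text{in}}(PP')$, $\fd_{\text{in}}(QQ')$) are unknotted and unlinked from the remaining strands to the extent that the backtrack cancellations and the ``pull-tight'' isotopies can genuinely be performed in $X_1$ --- this is visible from the explicit description of $\omega$ and from the figures, but it is the only place where a picture is really being used. Second, one must track at each step which of the two arcs $\omega_1,\omega_2$ is in play, so that the accumulated ambiguity is no worse than the single factor $\pm t^{3n}$ that the statement allows. I expect the first of these --- pinning down the isotopy type of the absorbed arcs of $\omega$ --- to be the genuine obstacle; once that is in hand the rest is bookkeeping of signs and framings.
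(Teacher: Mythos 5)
Your argument is essentially the paper's: the authors' entire proof is that the first identity is a tautology and the other three follow from ``a simple isotopy,'' and your absorption principle --- concatenating a sub-arc $\rho$ of $\omega$ onto a type 1 tangle changes the slide only by $\pm t^{3n}$, because one closure acquires a cancelling backtrack $\rho\cdot\bar\rho$ while the closing arc of the other is merely shortened by $\rho$ --- is exactly that isotopy made precise. It settles $\fa_1$ and $\fa_4$ completely. For $\fa_2$ and $\fa_3$ you should trust your own computation and stop there: absorbing $\omega_s$ from $\fa_2=\omega_s\star\fd_{\text{in}}(QQ')$ gives $sl(\fa_2)=sl(\fd_{\text{in}}(QQ'))$, and absorbing $\omega'_s$ from $\fa_3=\fd_{\text{in}}(PP')\star\omega'_s$ gives $sl(\fa_3)=sl(\fd_{\text{in}}(PP'))$; the printed statement simply has these two right-hand sides interchanged, which is immaterial because Lemma \ref{4} treats $sl(\fd_{\text{in}}(PP'))$ and $sl(\fd_{\text{in}}(QQ'))$ symmetrically and only the collection $\{sl(\fa_i)\star\CS(X_1)\}$ enters Lemma \ref{1}. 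The further step you single out as the genuine obstacle --- identifying $sl(\fd_{\text{in}}(QQ'))$ with $sl(\fd_{\text{in}}(PP'))$ by sliding a chord across a cap --- is therefore not needed, and it is not available by the method you describe: the two chords have different endpoints on $\omega$, and each cap is a ball meeting no strand of $L$, so an isotopy within a cap can only re-route a chord among arcs with the same endpoints. The two slides do coincide, but only for the a posteriori reason that both vanish, which is precisely the content of the $\psi$-symmetry argument in Lemma \ref{4}. With that relabelling, your proof is complete and, if anything, more careful than the one in the paper.
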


\begin{proof}
The first identity is a tautology. The last three follows from trivially a simple isotopy of the links involved.
\end{proof}

\begin{lemma}
For every $\ell \in \CS(X_1),$ one has $\psi(\ell)=\ell.$
\label{symmetry}
\end{lemma}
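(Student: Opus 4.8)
The plan is to show that $\psi$ acts trivially on each algebra generator $x, x', y$ of $\CS(X_1)\cong\CR[x,x',y]$, from which the claim follows since $\psi$ induces an algebra automorphism of $\CS(X_1)$. Recall $\psi$ is the rotation by $180^\circ$ about the $x_1$-axis, so $\psi(B_1)=B_1$ and, since $\psi$ fixes each of the intervals $UV$ and $U'V'$ setwise (it sends $U=(-1,1,0)$ to $(-1,-1,0)=V$ and $U'=(1,1,0)$ to $(1,-1,0)=V'$), it restricts to a homeomorphism of $X_1=B_1\setminus(UV\cup U'V')$. Hence $\psi$ induces a $\BC[t^{\pm1}]$-algebra automorphism of $\CS(X_1)$, and it suffices to check its effect on a generating set.

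First I would treat $y=\partial D$: under the identification $X_1\cong D_{**}\times[0,1]$, the loop $y$ is isotopic to a large loop near $\partial B_1$ encircling both $UV$ and $U'V'$, and $\psi$ carries such a loop to an isotopic one (it is rotation-invariant up to isotopy), so $\psi(y)=y$. For $x$ and $x'$, the key point is that $\psi$ interchanges the two intervals $UV$ and $U'V'$? No — $\psi$ is rotation about the $x_1$-axis, so it fixes each of $UV$ and $U'V'$ setwise (swapping the endpoints $U\leftrightarrow V$, $U'\leftrightarrow V'$); thus $\psi$ sends a small loop $x$ around $UV$ to a small loop around $UV$, hence $\psi(x)=x$ (possibly with reversed orientation, but unoriented loops in the KBSM are what matter, and framing is preserved since $\psi$ is orientation-preserving on $B_1$), and similarly $\psi(x')=x'$. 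Alternatively, and perhaps more cleanly, one observes that $\CS(X_1)\cong\CR[x,x',y]$ is generated by the three simple closed curves, each of which bounds a subsurface configuration that is manifestly $\psi$-invariant up to isotopy in $X_1$.

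The one point requiring a little care is whether $\psi$, being orientation-preserving on $\BR^3$, preserves the \emph{framing} convention (blackboard framing relative to the $x_1x_3$-projection used throughout): since $\psi$ is a rotation it is orientation-preserving and isotopic to the identity through rotations, so it preserves framings, and there is no sign ambiguity. I expect the main (minor) obstacle to be formulating precisely why the three generating loops are individually $\psi$-invariant up to isotopy rather than merely permuted — i.e. pinning down that rotation about the $x_1$-axis fixes rather than swaps the punctures, which is immediate from the coordinates $U=(-1,1,0)$, $U'=(1,1,0)$ (they have different $x_1$-coordinates, so the rotation cannot interchange them). Once that is observed, the proof is a one-line consequence: $\psi$ fixes each of $x$, $x'$, $y$, hence fixes every element of the algebra they generate.
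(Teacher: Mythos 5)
Your proposal is correct and takes essentially the same approach as the paper, which simply notes that $x$, $x'$ and $y$ are invariant under the rotation $\psi$; you have merely spelled out the details (that $\psi$ preserves each of $UV$ and $U'V'$ setwise and hence fixes each generator of $\CS(X_1)\cong\CR[x,x',y]$ up to isotopy).
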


\begin{proof}
This is because $x,x'$ and $y$ are invariant under the rotation $\psi.$
\end{proof}

\begin{lemma}
 One has
\begin{eqnarray*}
sl(\gamma_{\emph{in}}(PQ')) \star \CS(X_1) &=&  sl(\gamma_{\emph{in}}(P'Q)) \star \CS(X_1),\\
sl(\fd_{\emph{in}}(PP')) \star \CS(X_1) &=& 0,\\
sl(\fd_{\emph{in}}(QQ')) \star \CS(X_1) &=& 0.
\end{eqnarray*}
\label{4}
\end{lemma}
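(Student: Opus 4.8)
The plan is to prove the three identities of Lemma \ref{4} in order, exploiting the symmetry $\psi$ from Lemma \ref{symmetry} and direct isotopy arguments in the cylinder $X_1 = D_{**}\times[0,1]$.

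For the first identity, I would argue that the rotation $\psi$ by $180^\circ$ about the $x_1$-axis carries the arc $\gamma_{\text{in}}(PQ')$ to $\gamma_{\text{in}}(P'Q)$, since $\psi(P)=P'$, $\psi(Q)=Q'$ and $\psi$ sends shortest arcs pushed inside $B_1$ to shortest arcs pushed inside $B_1$; moreover $\psi(\omega)=\omega$ (as arranged earlier), so $\psi$ carries the two sub-arcs $\omega_1,\omega_2$ of $\omega$ cut off by the endpoints of $\gamma_{\text{in}}(PQ')$ to the corresponding sub-arcs cut off by $\gamma_{\text{in}}(P'Q)$. Therefore $\psi\big(sl(\gamma_{\text{in}}(PQ'))\big) = sl(\gamma_{\text{in}}(P'Q))$ up to a factor $\pm t^{3n}$, which is harmless after $\star\,\CS(X_1)$. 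Combining this with Lemma \ref{symmetry}, which says $\psi$ acts as the identity on $\CS(X_1)$, and the fact that $\star$ is compatible with $\psi$ (the disjoint-union operation is $\psi$-equivariant), we get $sl(\gamma_{\text{in}}(PQ'))\star\CS(X_1) = \psi\big(sl(\gamma_{\text{in}}(PQ'))\star\CS(X_1)\big) = sl(\gamma_{\text{in}}(P'Q))\star\CS(X_1)$.

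For the second and third identities, the key observation is that $\fd_{\text{in}}(PP')$ (resp. $\fd_{\text{in}}(QQ')$) together with one of the two sub-arcs of $\omega$ it determines bounds a disk in $X_1$, so that one of $\fa'\cdot\omega_1$, $\fa'\cdot\omega_2$ is already isotopic in $X_1$ to the other after pushing across that disk, making $sl(\fd_{\text{in}}(PP'))$ itself trivial — or, more carefully, that after resolving and closing up inside the cylinder $B_1[-\sqrt2,-1]$ (resp. $B_1[1,\sqrt2]$), which contains only the straight segment $UV$ (resp. $U'V'$), the resulting element is a product of trivial loops and hence lies in the image of the empty link; one then checks the coefficient works out so the difference $\fa'\cdot\omega_1 - \fa'\cdot\omega_2$ vanishes in $\CS(X_1)$. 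Concretely I expect $\omega_l$ (resp. $\omega_r$) together with $\fd_{\text{in}}(PP')$ (resp. $\fd_{\text{in}}(QQ')$) to cobound in the left (resp. right) solid region, giving the vanishing directly.

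I expect the main obstacle to be the second and third identities rather than the first: making precise "one sub-arc together with $\fd_{\text{in}}$ bounds a disk disjoint from the rest of $X_1$" requires a careful look at how $\omega_l$ (the parallel geodesics $\gamma(F_{2p-j}F_{2p+j-1})$) sits relative to the straight segment $UV$, and at which of the two complementary arcs $\omega_1,\omega_2$ is the relevant one; one must verify that the sliding disk can be chosen to miss $UV$, $U'V'$, and the other components of $\omega$. The first identity is essentially formal once one grants $\psi$-equivariance of everything in sight, which follows from the earlier normalizations $\psi(B_1)=B_1$ and $\psi(\omega)=\omega$ together with Lemma \ref{symmetry}.
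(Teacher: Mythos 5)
Your treatment of the first identity is exactly the paper's: $\psi(P)=P'$, $\psi(Q)=Q'$ give $\psi\bigl(sl(\gamma_{\text{in}}(PQ'))\bigr)=sl(\gamma_{\text{in}}(P'Q))$, and Lemma \ref{symmetry} finishes it. No issue there.

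The second and third identities are where your proposal breaks down, and you correctly sensed that this is the delicate part. The two complementary arcs $\omega_1(P,P')$ and $\omega_2(P,P')$ are \emph{not} contained in $S_l$, and they are not built from $\omega_l$ alone: each of them is roughly ``half'' of the closed curve $\omega$, so each contains pieces of $\omega_l$, $\omega_m$, and $\omega_r$ and winds around the punctures of $D_{**}$ on the order of $p$ times. Consequently neither $\fd_{\text{in}}(PP')\cdot\omega_1(P,P')$ nor $\fd_{\text{in}}(PP')\cdot\omega_2(P,P')$ bounds a disk in $X_1$, neither lies in the left cylinder $B_1[-\sqrt{2},-1]$, and there is no ``sliding disk'' of the kind you describe. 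The claim that the resulting elements are products of trivial loops is false for $p>1$, so this part of the argument cannot be repaired along the lines you suggest.

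The correct argument is another application of the same symmetry you used for the first identity, applied now to a \emph{single} arc rather than to a pair. The segment $\fd_{\text{in}}(PP')$ is invariant under $\psi$ as a set (its two endpoints are swapped, since $P'=\psi(P)$ is the antipode of $P$ on the circle $C$), and $\omega$ is $\psi$-invariant. A fixed-point-free involution of the circle $\omega$ that exchanges $P$ and $P'$ must exchange the two complementary arcs, so $\psi\bigl(\fd_{\text{in}}(PP')\cdot\omega_1(P,P')\bigr)=\fd_{\text{in}}(PP')\cdot\omega_2(P,P')$. By Lemma \ref{symmetry}, $\psi$ acts as the identity on $\CS(X_1)$, so for every $\ell\in\CS(X_1)$ the two terms of
$$sl(\fd_{\text{in}}(PP'))\star\ell=\bigl(\fd_{\text{in}}(PP')\cdot\omega_1(P,P')-\fd_{\text{in}}(PP')\cdot\omega_2(P,P')\bigr)\star\ell$$
are equal, and the difference vanishes. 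The identity for $\fd_{\text{in}}(QQ')$ is identical with $Q,Q'$ in place of $P,P'$. So the vanishing is a symmetry cancellation between the two terms of the slide, not a triviality of either term individually.
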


\begin{proof}
Since $\psi(P)=P'$ and $\psi(Q)=Q'$, we have $\psi(sl(\gamma_\text{in}(PQ'))) = sl
(\gamma_\text{in}(P'Q)).$ Hence $sl(\gamma_\text{in}(PQ')) \star \CS(X_1) =  sl(\gamma_\text{in}(P'Q)) \star \CS(X_1)$ by Lemma \ref{symmetry}.

Since both
$\fd_\text{in}(PP')$ and $\omega$ is invariant under $\psi$, we have
$\psi(\fd_\text{in}(PP') \cdot \omega_1(P,P')) = \fd_\text{in}(PP') \cdot\omega_2(P,P'),$ where
$\omega_1(P,P')$ and $\omega_2(P,P')$ are the two arcs of $\omega$ obtained
by dividing $\omega$ using the two points $P,P'$. It implies that
$$sl(\fd_\text{in}(PP')) \star \CS(X_1) = \left( \fd_\text{in}(PP') \cdot \omega_1(P,P') -
\fd_\text{in}(PP') \cdot \omega_2(P,P') \right) \star \CS(X_1)=0.$$
This completes the proof of the lemma.
\end{proof}

\subsection{Proof of Theorem \ref{thm}}
Let $\CR=\BC[t^{\pm 1}]$. We have $\CS(X)=
 \CR[x,x',y]/\CK,$ where $\CK$ is the
 $\CR$-span of $sl(\fa_1)\star\CR[x,x',y]$, by Lemmas \ref{1}, \ref{2} and \ref{4}. Note that there is a
 natural $\CR[x,x']$-module structure on $\CS(X)$: Here $x,x'$ are
 meridians, thus belong to the boundary of $X$. Over $\CR[x,x']$,
 $\CR[x,x',y]$ is spanned by $1,y,y^2,\dots$. Hence $\CK$, as an
 $\CR[x,x']$-module, is spanned by $sl(\fa_1)\star y^k= (\fa_1 \cdot \omega_1-\fa_1 \cdot \omega_2)\star y^k ,
 k=0,1,2,\dots$.

Note that $\fa_1 \cdot \omega_1$ is the closure in the sense of \cite[Section 1.5]{Le06}
 of a braid on $(2p+2)$ strands, while $\fa_1 \cdot \omega_2$ is the
closure  of a braid on $(2p-2)$ strands. Moreover, $(\fa_1 \cdot \omega_1)
\star y^k$ is the closure of a braid on $(2p+2) + 2k$ strands,
while $(\fa_1 \cdot \omega_2) \star y^k$ is the closure of of a braid on
$(2p-2) + 2k$ strands. Lemma $1.1$ in \cite{Le06} then shows that
$(\fa_1 \cdot \omega_1-\fa_1 \cdot \omega_2)\star y^k$, as an element of $\CR[x,x',y]$,
has $y$-degree $(p+1)+ k$, with highest coefficient invertible
and of the form a power of $t$. Hence when we factor out
$\CR[x,x',y]$ by $\CK$, we get a free $\CR[x,x']$-module with
representatives $y^l, l=0,1,2,\dots, p$ as a basis.

\subsection{Proof of Proposition \ref{phu}}
From Theorem 1 it follows that $\ve(\CS(X))$ is the quotient of the ring $\BC[\bar{x},\bar{x'},\bar{y}]$ by the ideal $I$ generated by $\varepsilon(sl(\fa_1))\star\BC[\bar{x},\bar{x'},\bar{y}]$, where $\bar{z}$ denotes the negative of the trace of the loop $z$. 

Note that $\ve(\CS(X))$ has a natural $\BC$-algebra structure and $\star$ is just the multiplication of this algebra. It implies that $I$ can be generated by only one element which is $\varepsilon(sl(\fa_1))$. Hence $\ve(\CS(X))=\BC[\bar{x},\bar{x'},\bar{y}]/(\varphi)$, where $\varphi=\varepsilon(sl(\fa_1)) \in \BC[\bar{x},\bar{x'},\bar{y}].$ Note that $\varphi$ is a polynomial of $\bar{y}$-degree $p+1$ with leading coefficient $\pm 1$. 

We claim that $\varphi$ has no repeated factors. Since $\varphi$ is a polynomial of $\bar{y}$-degree $p+1$ with leading coefficient $\pm 1$, it suffices to show that $\varphi(0,0,\bar{y})$ has no repeated factors.

\begin{lemma}
 One has $$\varphi(0,0,\bar{y})=\pm (\bar{y}^2-4)S_{p-1}(\bar{y}),$$ where $S_n(\bar{y})$ are the Chebyshev polynomials defined by $S_0(\bar{y})=1, S_1(\bar{y})=\bar{y}$ and $S_{n+1}(\bar{y})=\bar{y}S_n(\bar{y})-S_{n-1}(\bar{y})$ for all integer $n$.
\label{00}
\end{lemma}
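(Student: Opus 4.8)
The plan is to compute $\varphi(0,0,\bar y)$ directly from the description $\varphi = \varepsilon(sl(\fa_1))$, where $sl(\fa_1) = \gamma_{\text{in}}(PQ') \cdot \omega_1 - \gamma_{\text{in}}(PQ') \cdot \omega_2$ as an element of $\CR[x,x',y]$. Setting $t=-1$ and $\bar x = \bar x' = 0$ amounts to evaluating the characters: specializing $t=-1$ turns $sl(\fa_1)$ into a polynomial in $\bar x, \bar x', \bar y$ (via the universal character ring identification $K \mapsto -\tr \rho(K)$), and then setting $\bar x = \bar x' = 0$ corresponds to restricting to representations $\rho$ of $\pi_1(X_1) = \pi_1(D_{**})$ (a free group on the meridians around $U$ and $U'$) for which both meridians have trace $0$. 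Concretely, I would parametrize such a representation by $\rho(x), \rho(x') \in SL_2(\BC)$ with $\tr = 0$, so each is conjugate to an order-$4$ element, and $\rho(y) = \rho(x)\rho(x')$ (since $y = \partial D$ is the product of the two meridians up to orientation), with $\bar y = -\tr(\rho(x)\rho(x'))$ free to be any complex number by choosing the relative position of the two trace-zero matrices.

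First I would unwind what the two curves $\gamma_{\text{in}}(PQ') \cdot \omega_i$ represent as elements of $\pi_1(X_1)$. The arc $\fa_1 = \gamma_{\text{in}}(PQ')$ closed up by $\omega_1$ (the arc $\omega_s$ going once around $U$ and $V'$) and by $\omega_2$ (the complementary arc, going around $U'$ and $V$) give two knots in $X_1$; under $\varepsilon$ each contributes $-\tr$ of the corresponding element of the free group. Because of the trace-zero conditions $\tr\rho(x) = \tr\rho(x') = 0$, many terms in the skein-theoretic expansion collapse — this is exactly the mechanism that will make $\varphi(0,0,\bar y)$ factor through Chebyshev polynomials in $\bar y = -\tr\rho(x)\rho(x')$. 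The key identity to exploit is that for a trace-zero matrix $M \in SL_2(\BC)$ one has $M^2 = -I$, which drastically simplifies words in $x$ and $x'$; and that powers of the single group element $g := \rho(x)\rho(x')$ satisfy $-\tr(g^n) = S_{n-1}(\bar y) \cdot(\text{something}) $ — more precisely $\tr(g^n)$ is a Chebyshev-type polynomial in $\tr g$, which accounts for the $S_{p-1}(\bar y)$ factor, while the exceptional factor $\bar y^2 - 4 = (\tr g)^2 - 4$ will appear from the discriminant-type term distinguishing the closure over $\omega_1$ from the closure over $\omega_2$ (the difference in strand numbers, $2p+2$ versus $2p-2$, telescopes).

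The main obstacle I anticipate is bookkeeping: tracking precisely which element of the free group $\langle x, x'\rangle$ the curves $\gamma_{\text{in}}(PQ')\cdot\omega_1$ and $\gamma_{\text{in}}(PQ')\cdot\omega_2$ represent, as a function of $p$ and $q$, from the explicit geodesic description of $\omega$ in Subsection ``The link complement.'' Once that word is identified — I expect it to be something like $(xx')^p$ times correction terms, or a palindromic word reflecting the $\psi$-symmetry — the trace computation is a routine induction using $M^2=-I$ and the Cayley–Hamilton relation $\tr(AB) + \tr(AB^{-1}) = \tr A \tr B$. A cleaner route that avoids the explicit word: use that $\varepsilon(\CS(X))$ is the universal character ring of $\pi_1(X) = \pi_1(S^3\setminus L)$, and that setting $\bar x = \bar x' = 0$ gives the character ring of the quotient group where both meridians are killed to order $4$; for a two-bridge link this quotient is closely tied to the branched double cover, whose fundamental group acting on $\tilde S$ makes the $(\bar y^2-4)S_{p-1}(\bar y)$ shape transparent from the $2p$-fold periodicity. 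Finally, once Lemma \ref{00} is in hand, $\varphi(0,0,\bar y) = \pm(\bar y - 2)(\bar y+2)S_{p-1}(\bar y)$ has no repeated factors since $S_{p-1}$ has distinct roots $2\cos(k\pi/p)$, $1 \le k \le p-1$, none equal to $\pm 2$, completing the argument that $\varphi$ — and hence $\varepsilon(\CS(S^3\setminus L))$ — is reduced.
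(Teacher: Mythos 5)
Your proposal identifies the right framework (at $t=-1$ the value of a knot in $\CS(X_1)$ is $-\tr$ of its free homotopy class, and $\bar x=\bar x'=0$ forces $\rho(x)^2=\rho(x')^2=-I$, which collapses words to powers of $\rho(x)\rho(x')$ and produces Chebyshev polynomials), but it stops exactly where the content of the lemma begins. The whole difficulty is to determine which elements of the free group $\la x,x'\ra$ the two closed curves $\fa_1\cdot\omega_1$ and $\fa_1\cdot\omega_2$ represent, and you explicitly defer this (``once that word is identified \dots the trace computation is a routine induction''), offering only a guess (``something like $(xx')^p$ times correction terms''). There is also a concrete sign that the bookkeeping is not under control: you identify $\omega_1$ with $\omega_s$, but $\omega_s$ is the arc of $\omega$ cut out by the points $P,Q$, whereas the arcs $\omega_1,\omega_2$ relevant to $sl(\fa_1)$ are cut out by the endpoints $P,Q'$ of $\fa_1=\gamma_{\text{in}}(PQ')$. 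The asserted origin of the factor $\bar y^2-4$ (``discriminant-type term \dots telescopes'') and the alternative route through the branched double cover are likewise heuristics rather than arguments. As written, the proposal is a plan whose hard step is missing.

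The paper avoids this word-identification entirely by an indirect argument you may want to compare with. It takes the standard Burde--Zieschang presentation $\pi_1(L)=\la\tilde x,\tilde x'\mid \tilde xw=w\tilde x\ra$ with $w=(\tilde x')^{\varepsilon_1}\cdots(\tilde x')^{\varepsilon_{2p-1}}$, and observes that $\eta=\tr\left((\tilde x)^{-1}w\tilde x(\tilde x')^{-1}\right)-\tr\left(w(\tilde x')^{-1}\right)$ vanishes in the universal character ring, hence is divisible by $\varphi$ in $\BC[\bar x,\bar x',\bar y]$. Setting $\bar x=\bar x'=0$, Cayley--Hamilton gives $(\tilde x)^{-1}=-\tilde x$ and $(\tilde x')^{-1}=-\tilde x'$, so the two words collapse (up to a common sign) to $(\tilde x\tilde x')^{p+1}$ and $(\tilde x'\tilde x)^{p-1}$, and the recursion $\delta_{n+1}=-\bar y\,\delta_n-\delta_{n-1}$ with $\delta_1=\bar y^2-4$ yields $\eta(0,0,\bar y)=\pm(\bar y^2-4)S_{p-1}(\bar y)$, of degree $p+1$ with leading coefficient $\pm1$. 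Since $\varphi$ also has $\bar y$-degree $p+1$ with unit leading coefficient (this comes from the skein-theoretic proof of Theorem \ref{thm}, not from a trace computation), degree comparison in the relation $\eta=\varphi h$ forces $h=\pm1$ and hence $\varphi(0,0,\bar y)=\pm(\bar y^2-4)S_{p-1}(\bar y)$. To make your direct approach work you would in effect have to re-derive that presentation from the geodesic description of $\omega$; the divisibility-plus-degree argument is what lets the paper dispense with that.
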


\begin{proof}
By \cite{BZ}, the fundamental group of the two-bridge link $L=\fb(2p,q)$ is
$$\pi_1(L) = \la \tilde{x}, \tilde{x'} \mid \tilde{x}w=w\tilde{x} \ra,$$
where $w=(\tilde{x'})^{\varepsilon_1}(\tilde{x})^{\varepsilon_2} \cdots (\tilde{x})^{\varepsilon_{2p-2}}(\tilde{x'})^{\varepsilon_{2p-1}}$ and $\varepsilon_k=(-1)^{\lfloor \frac{kq}{2p} \rfloor}.$ Here $\tilde{x},\tilde{x'}$ are meridians of the link $L$, and are conjugate to $x,x'$ respectively.

The character variety of the free group in 2 letters $\tilde{x}$ and $\tilde{x'}$ is isomorphic to $\BC^3$, by the Fricke-Klein-Vogt theorem. For every word $z$, the trace of $z$ is a polynomial in 3 variables $\tr \tilde{x}=-\bar{x}, \tr \tilde{x'}=-\bar{x'}$ and $\tr (\tilde{x}\tilde{x'})=-\bar{y}.$

Note that the traces of the words $(\tilde{x})^{-1}w\tilde{x}(\tilde{x'})^{-1}$ and $w(\tilde{x'})^{-1}$ are equal. Hence $$\eta=\tr \left( (\tilde{x})^{-1}w\tilde{x}(\tilde{x'})^{-1} - w(\tilde{x'})^{-1} \right)$$ is divisible by $\varphi$ in $\BC[\bar{x},\bar{x'},\bar{y}].$

Suppose from now on $\bar{x}=\bar{x'} =0$. We have $(\tilde{x})^{-1}+\tilde{x}=\tr \tilde{x} = -\bar{x}=0$, i.e. $(\tilde{x})^{-1}=-\tilde{x}$, by the Cayley-Hamilton theorem applying for matrices in $SL_2(\BC)$. Here we identify $\tilde{x}$ with its representation matrix in $SL_2(\BC).$
Similarly, $(\tilde{x'})^{-1}=-\tilde{x'}.$

Let $k$ be the the number of times the power $-1$ appears in the word $w(\tilde{x'})^{-1}=(\tilde{x'})^{\varepsilon_1}(\tilde{x})^{\varepsilon_2} \cdots (\tilde{x})^{\varepsilon_{2p-2}}$. Then it is easy to see that the number of times the power $-1$ appears in the word $(\tilde{x})^{-1}w\tilde{x}(\tilde{x'})^{-1}=(\tilde{x})^{-1}(\tilde{x'})^{\varepsilon_1}(\tilde{x})^{\varepsilon_2} \cdots (\tilde{x})^{\varepsilon_{2p-2}}(\tilde{x'})^{\varepsilon_{2p-1}}\tilde{x}(\tilde{x'})^{-1}$ is $k+2.$ If we replace $(\tilde{x})^{-1}$ and $(\tilde{x'})^{-1}$ in $w(\tilde{x'})^{-1}$ by $\tilde{x}$ and $\tilde{x'}$ respectively then we pick up the sign $(-1)^k$, i.e. we have $w(\tilde{x'})^{-1}=(-1)^k(\tilde{x'}\tilde{x})^{p-1}.$ Similarly, $(\tilde{x})^{-1}w\tilde{x}(\tilde{x'})^{-1}=(-1)^{k+2}(\tilde{x}\tilde{x'})^{p+1}$. It implies that $\eta(0,0,\bar{y})=(-1)^k \tr \left( (\tilde{x}\tilde{x'})^{p+1}- (\tilde{x'}\tilde{x})^{p-1} \right)$.

Let $\delta_n=\tr \left( (\tilde{x}\tilde{x'})^{n+1}- (\tilde{x'}\tilde{x})^{n-1} \right).$ By the Cayley-Hamiton, $\tilde{x}\tilde{x'}+(\tilde{x}\tilde{x'})^{-1}=\tr (\tilde{x} \tilde{x'})= -\bar{y}$. This implies that $\delta_{n+1}=-\bar{y}\delta_n-\delta_{n-1}.$ It is easy to check that $\delta_1=\bar{y}^2-4,\:\delta_2=-(\bar{y}^2-4)\bar{y}.$ Hence
 $\delta_n=(-1)^{n-1} (\bar{y}^2-4)S_{n-1}(\bar{y})$ where $S_n(\bar{y})$ are the Chebyshev polynomials defined by $S_0(\bar{y})=1, S_1(\bar{y})=\bar{y}$ and $S_{n+1}(\bar{y})=\bar{y}S_n(\bar{y})-S_{n-1}(\bar{y})$ for all integer $n$. 

We have $\eta(0,0,\bar{y})=(-1)^k\delta_p=(-1)^{k+p-1}(\bar{y}^2-4)S_{p-1}(\bar{y})$, which is a polynomial of degree $p+1$ in $\bar{y}$ with leading coefficient $(-1)^{k+p-1}.$ Since $\eta$ is divisible by $\varphi$, and $\varphi$ is also a polynomial of $\bar{y}$-degree $p+1$ with leading coefficient $\pm 1$, we must have $\varphi(0,0,\bar{y})=\pm (\bar{y}^2-4)S_{p-1}(\bar{y})$ as desired.
\end{proof}

It is known that $S_{p-1}(\bar{y})=\prod_{j=1}^{p-1} (\bar{y}-2\cos \frac{\pi j}{p})$ and hence $(\bar{y}^2-4)S_{p-1}(\bar{y})$ has no repeated factors. By Lemma \ref{00}, it follows that $\varphi$ has no repeated factors either. Hence the nil-radical of $\ve(\CS(X))$ is zero, which means that $\ve(\CS(X))$ is exactly equal to $\BC[\chi(\pi_1(X))]$. This completes the proof of Proposition \ref{phu}.

\begin{corollary}
The character ring of the two-bridge link $\fb(2p,q)$ is the quotient of the ring $\BC[\bar{x}, \bar{x'},\bar{y}]$ by the ideal generated by the polynomial $\eta=\tr ((\tilde{x})^{-1}w\tilde{x}(\tilde{x'})^{-1}) - \tr (w(\tilde{x'})^{-1})$, where $\bar{x}, \, \bar{x'}, \, \bar{y}, \, \tilde{x}, \, \tilde{x'}$ and $w$ are defined as in the proof of Lemma \ref{00}.
\label{character}
\end{corollary}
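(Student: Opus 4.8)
The plan is to read off the corollary from Proposition~\ref{phu} together with facts already recorded in the proof of Lemma~\ref{00}. By Proposition~\ref{phu} the character ring of $L=\fb(2p,q)$ is $\ve(\CS(X))$, and the proof of Proposition~\ref{phu} identifies $\ve(\CS(X))=\BC[\bar{x},\bar{x'},\bar{y}]/(\varphi)$ with $\varphi=\ve(sl(\fa_1))$, a polynomial of $\bar{y}$-degree $p+1$ whose coefficient of $\bar{y}^{\,p+1}$ is $\pm 1$. In the proof of Lemma~\ref{00} it is observed that $(\tilde{x})^{-1}w\tilde{x}(\tilde{x'})^{-1}$ and $w(\tilde{x'})^{-1}$ represent the same element of $\pi_1(X)$, so $\eta$ lies in the kernel of $\BC[\bar{x},\bar{x'},\bar{y}]\to\ve(\CS(X))=\BC[\bar{x},\bar{x'},\bar{y}]/(\varphi)$, i.e.\ $\varphi\mid\eta$. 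Since Lemma~\ref{00} also gives $\eta(0,0,\bar{y})=\pm(\bar{y}^2-4)S_{p-1}(\bar{y})\neq 0$, the polynomial $h:=\eta/\varphi\in\BC[\bar{x},\bar{x'},\bar{y}]$ is well defined and nonzero, and the corollary is equivalent to the assertion $h\in\BC^{\times}$: for then $(\eta)=(\varphi)$ and hence $\BC[\chi(\pi_1(X))]=\ve(\CS(X))=\BC[\bar{x},\bar{x'},\bar{y}]/(\varphi)=\BC[\bar{x},\bar{x'},\bar{y}]/(\eta)$.

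To prove $h\in\BC^{\times}$ I would show that, viewed as a polynomial in $\bar{y}$ over $\BC[\bar{x},\bar{x'}]$, the polynomial $\eta$ has degree exactly $p+1$ with leading coefficient lying in $\BC^{\times}$. Granting this, $\eta=\varphi h$ and $\deg_{\bar{y}}\varphi=p+1$ force $\deg_{\bar{y}}h=0$, so $h\in\BC[\bar{x},\bar{x'}]$, and comparing coefficients of $\bar{y}^{\,p+1}$ gives $(\pm 1)\,h=(\text{a nonzero constant})$, whence $h\in\BC^{\times}$. For the degree claim, recall $w(\tilde{x'})^{-1}=(\tilde{x'})^{\varepsilon_1}(\tilde{x})^{\varepsilon_2}\cdots(\tilde{x})^{\varepsilon_{2p-2}}$ has length $2p-2$, so $\tr\big(w(\tilde{x'})^{-1}\big)$ has $\bar{y}$-degree at most $p-1$; while $(\tilde{x})^{-1}w\tilde{x}(\tilde{x'})^{-1}$ is a cyclically reduced word of length $2p+2$ whose letters alternate between $(\tilde{x})^{\pm 1}$ and $(\tilde{x'})^{\pm 1}$. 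Expanding each letter by Cayley--Hamilton as $(\tilde{x})^{\epsilon}=\epsilon\,\tilde{x}+c_{\epsilon}\,\mathrm{Id}$ and $(\tilde{x'})^{\epsilon}=\epsilon\,\tilde{x'}+c'_{\epsilon}\,\mathrm{Id}$ with $c_{\epsilon},c'_{\epsilon}\in\BC[\bar{x},\bar{x'}]$, the unique summand in which no letter is replaced by a multiple of $\mathrm{Id}$ equals $\big(\varepsilon_1\cdots\varepsilon_{2p-1}\big)\tr\big((\tilde{x}\tilde{x'})^{p+1}\big)$ (the three extra letters contribute an overall sign $1$); since $\tr(g^{p+1})$ is a degree-$(p+1)$ polynomial in $\tr g$ with leading coefficient $1$, this summand is a polynomial in $\bar{y}$ of degree $p+1$ with leading coefficient $\pm 1$. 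Every other summand is a scalar in $\BC[\bar{x},\bar{x'}]$ times the trace of a word of length $\le 2p+1$, hence has $\bar{y}$-degree $\le p$. Therefore the coefficient of $\bar{y}^{\,p+1}$ in $\eta$ is $\pm 1\in\BC^{\times}$, as required.

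The one genuinely computational point — the main obstacle — is the elementary estimate $\deg_{\bar{y}}\tr(\text{word of length }\ell)\le\lfloor \ell/2\rfloor$ used above, i.e.\ making precise that replacing a letter of the length-$(2p+2)$ alternating word by a scalar strictly lowers the $\bar{y}$-degree of its trace. One clean route: use Cayley--Hamilton to rewrite all exponents as $+1$ (this only multiplies by powers of $\tr\tilde{x}=-\bar{x}$ and $\tr\tilde{x'}=-\bar{x'}$, never introducing $\bar{y}$), then collapse each maximal block $(\tilde{x})^{k}$ or $(\tilde{x'})^{k}$ via $g^{k}=S_{k-1}(\tr g)\,g-S_{k-2}(\tr g)\,\mathrm{Id}$; the trace of any length-$\ell$ word thereby becomes a $\BC[\bar{x},\bar{x'}]$-linear combination of traces of reduced alternating words of length $\le\ell$, and a reduced alternating word of length $2m$ (or $2m+1$) has trace of $\bar{y}$-degree $m\le\lfloor\ell/2\rfloor$ because its trace is a polynomial in $\tr(\tilde{x}\tilde{x'})=-\bar{y}$ (times at most one factor $\tilde{x}$ or $\tilde{x'}$). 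As a consistency check, specializing $\bar{x}=\bar{x'}=0$ in the computation of the second paragraph recovers the formula $\eta(0,0,\bar{y})=\pm(\bar{y}^2-4)S_{p-1}(\bar{y})$ of Lemma~\ref{00}, with exactly the predicted degree and leading coefficient (and it also pins down the overall sign in terms of $p$ and $q$).
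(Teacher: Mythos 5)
Your proposal is correct and follows essentially the same route as the paper: both deduce $\varphi\mid\eta$ from the proof of Lemma \ref{00}, show that $\eta$ has $\bar{y}$-degree $p+1$ with leading coefficient $\pm 1$ (a point the paper dismisses as ``easy to show'' and you flesh out via the Cayley--Hamilton expansion of the alternating word), and conclude $\eta=\pm\varphi$, hence $(\eta)=(\varphi)$, so the corollary follows from Proposition \ref{phu}. No substantive difference in strategy.
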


\begin{proof}
We still use the notations in the proof of Lemma \ref{00}. 

Since $w=(\tilde{x'})^{\varepsilon_1}(\tilde{x})^{\varepsilon_2} \cdots (\tilde{x})^{\varepsilon_{2p-2}}(\tilde{x'})^{\varepsilon_{2p-1}}$ and $\varepsilon_k=(-1)^{\lfloor \frac{kq}{2p} \rfloor}=\pm 1$, it is easy to show that the traces of the words $(\tilde{x})^{-1}w\tilde{x}(\tilde{x'})^{-1}$ and $w(\tilde{x'})^{-1}$ have $\bar{y}$-degrees equal to $p+1$ and $p-1$ respectively, with leading coefficients $\pm 1$. It implies that the polynomial $\eta$ has $\bar{y}$-degree $p+1$ with leading coefficient $\pm 1$. Since $\eta$ is divisible by $\varphi,$ we must have $\eta=\pm \varphi.$ Hence, by Proposition \ref{phu}, the character ring of $\fb(2p,q)$ is equal to the quotient of the ring $\BC[\bar{x}, \bar{x'},\bar{y}]$ by the ideal generated by the polynomial $\eta=\tr ( (\tilde{x})^{-1}w\tilde{x}(\tilde{x'})^{-1}) - \tr(w(\tilde{x'})^{-1})$.
\end{proof}

\begin{remark}
 Corollary \ref{character} was already obtained in \cite{Ri} although it was not completely written in form of traces. The proof we present here essentially follows directly from Theorem \ref{thm}. 
 
 One can easily show that the characters of abelian representations (into $SL_2(\BC)$) of the two-bridge link $\fb(2p,q)$ is determined by the polynomial $$\eta_{\emph{ab}}=\tr(\tilde{x}\tilde{x'}(\tilde{x})^{-1}(\tilde{x'})^{-1})=\bar{y}^2+\bar{x}^2+\bar{x'}^2+\bar{y}\bar{x}\bar{x'}-4.$$
Hence, by Corollary \ref{character}, the characters of non-abelian representations of $\fb(2p,q)$ is determined by the polynomial $\eta_{\emph{nab}}=\eta/\eta_{\emph{ab}}$. The polynomial $\eta_{\emph{nab}}$ has $\bar{y}$-degree $p-1$ with leading coefficient $\pm 1.$ After a suitable change of variables, it is exactly the polynomial $\Phi_{\pi L}$ in \cite[Lemma 2]{Ri}, up to $\pm 1$.
\end{remark}

\end{document}